\documentclass[11pt]{amsart}

\usepackage{amsfonts, amsmath, amssymb, amsthm, amscd}
\usepackage{mathtools}
\usepackage{graphicx}
\usepackage{color, xcolor}
	\definecolor{Blue}{HTML}{3d25b9}
	\definecolor{Red}{HTML}{c00054} 
\usepackage{booktabs}
\usepackage{tabularx}
\usepackage{blkarray}
\usepackage{makecell} 
\usepackage{multicol}


\usepackage[margin=25mm]{geometry}

\usepackage{marginnote}
\long\def\@savemarbox#1#2{\global\setbox#1\vtop{\hsize\marginparwidth 
  \@parboxrestore\tiny\raggedright #2}}
\marginparwidth .75in \marginparsep 7pt 
 \normalmarginpar


\usepackage[colorlinks=true,linkcolor=Blue,citecolor=Blue]{hyperref}
	\hypersetup{breaklinks=true}
\usepackage[capitalise,noabbrev]{cleveref}
	\crefname{equation}{equation}{equations}
\newcommand{\order}[1]{}


\theoremstyle{plain}
	\newtheorem{theorem}{Theorem}
	
	\newtheorem{corollary}[theorem]{Corollary}
	\newtheorem{lemma}[theorem]{Lemma}
	
	\numberwithin{theorem}{section}
\theoremstyle{definition}
	\newtheorem{definition}[theorem]{Definition}
	
\theoremstyle{remark}
	\newtheorem {remark }{Remark}
	\newtheorem{remark}[theorem]{Remark}




\begin{document}
\title{Twisting, ladder graphs and {A}-polynomials}
\author{Em K. Thompson}
\address{School of Mathematics,
Monash University,
VIC 3800, Australia}
\email{em.thompson@monash.edu}

\begin{abstract}
    We extend recent work by Howie, Mathews and Purcell to simplify the calculation of A-polynomials for any family of hyperbolic knots related by twisting. The main result follows from the observation that equations defining the deformation variety that correspond to the twisting are reminiscent of exchange relations in a cluster algebra. We prove two additional results with analogues in the context of cluster algebras: the Laurent phenomenon, and intersection numbers appearing as exponents in the denominator. We demonstrate our results on the twist knots, and on a family of twisted torus knots for which A-polynomials have not previously been calculated.
\end{abstract}

\maketitle

\section{Introduction}
The A-polynomial is an invariant of a (framed) one-cusped 3-manifold that was originally introduced in 1994~\cite{CCGLS}. It is a 2-variable polynomial in $L$ and $M$, describing the relationship between the eigenvalues of the meridian and longitude of the cusp under representations of the fundamental group into SL$(2,\mathbb{C})$. This polynomial carries a number of important properties including the ability to detect boundary slopes of incompressible surfaces in the knot complement. The A-polynomial is also known in connection to the coloured Jones polynomial through the so-called AJ conjecture~\cite{Ga04}. Unfortunately, the A-polynomial is difficult to compute in general, and effective methods of computation remain elusive. In a recent paper by Howie, Mathews and Purcell~\cite{HMP}, equations involved in the calculations of A-polynomials were shown to resemble exchange relations of a cluster algebra. In this paper, we make use of this rich algebraic structure to simplify the calculations of A-polynomials for infinite families of knots related by twisting. 

\subsection{The A-polynomial}\label{sec:Apoly} To compute the A-polynomial of a knot naively, one can assign arbitrary SL$(2,\mathbb{C})$ matrices to each generator in the knot group and set up a system of equations that ensure the group relations are satisfied. Considering the words corresponding to the meridian and longitude, and declaring the eigenvalues of their images to be $M$ and $L$, respectively, we obtain further equations involving these variables.
The number of equations in this system scales linearly with the number of relations in the fundamental group. Eliminating all variables other than $M$ and $L$ gives the A-polynomial. This approach is effective so long as the number of relations is small, and as such, the A-polynomial is readily computable for knots with small crossing number (using, for instance, the Wirtinger presentation, which requires one less relation than there are crossings in a diagram). The A-polynomial has also been calculated for some infinite families of knots with simple fundamental group presentations, such as the twist knots: a recursive formula was given by Hoste and Shanahan~\cite{HosteShanahan04} (recovered and generalised later by Petersen~\cite{Pe15}), and then made explicit by Mathews~\cite{Mathews2014, Mathews2014err}. It is a well known problem in elimination theory that finding resultants in a system of polynomial equations becomes computationally difficult when the number of equations is large or the degree of the polynomials is high. This is a recurring challenge in the calculation of A-polynomials, which we partially address in this paper. 

Based on Thurston's study of the deformation variety of hyperbolic knots~\cite{Thurston:3DGT}, Champanerkar~\cite{Champanerkar:Thesis} developed a method for computing an analogue of the SL$(2,\mathbb{C})$ A-polynomial. He showed that this method results in a polynomial that is a divisor of a PSL$(2,\mathbb{C})$ version of the A-polynomial, which is explicitly related to the SL$(2,\mathbb{C})$ A-polynomial. In particular, Champanerkar's polynomial is guaranteed to include a factor that corresponds to a complete hyperbolic structure on the knot complement~\cite{Champanerkar:Thesis}. This factor is equal to a corresponding factor in the PSL$(2,\mathbb{C})$ A-polynomial containing a discrete, faithful representation associated with the complete structure. As such, we call this factor of the PSL$(2,\mathbb{C})$ A-polynomial, or the corresponding factor of the SL$(2,\mathbb{C})$ A-polynomial, the \textit{geometric factor}. Champanerkar showed that his polynomial detects boundary slopes of incompressible surfaces in the knot complement, in the same way that the SL$(2,\mathbb{C})$ A-polynomial of~\cite{CCGLS} does. 

Champanerkar's polynomial can be calculated directly for hyperbolic knots that are built from a small number of tetrahedra; however, once the number of tetrahedra required to triangulate the knot complement becomes too large, calculations are again impeded by the limitations of elimination theory. Culler developed a numerical method for computing divisors of the SL$(2,\mathbb{C})$ A-polynomial that contain the geometric factor, which also uses the deformation variety. He set up a database of these polynomials for knots with small crossing numbers and knots with low triangulation complexity~\cite{Culler}. 

To date, A-polynomials, or divisors containing the geometric factor, are known for all knots with up to eight crossings, many knots with nine crossings and some knots with ten crossings, as well as all hyperbolic knots that can be triangulated by up to seven ideal tetrahedra~\cite{Culler}. There also exist explicit formulas for the A-polynomials of the torus knots~\cite{CCGLS}, the twist knots~\cite{Mathews2014,Mathews2014err}, iterated torus knots~\cite{NiZh17}, and knots with Conway's notation $C(2m,3)$~\cite{HL16}. Recursive formulas exist for the A-polynomials of certain classes of two-bridge knots~\cite{HosteShanahan04,Pe15}, and a family of pretzel knots~\cite{GM11,TY04}. 

This family of pretzel knots is found by $1/m$ Dehn fillings of what Garoufalidis calls a \textit{favorable link}~\cite{Ga14}. That is, the geometric factors of A-polynomials for the $1/m$ fillings of this link satisfy a particular recurrence. Indeed, Garoufalidis proves more generally that there exists a recurrent sequence of rational functions containing the geometric factor of the A-polynomial for any family of knots related by twisting (see Theorem 3.1. of~\cite{Ga14}). Our results lead to a similar observation but where the rational functions are given explicitly rather than recursively.

In Section~\ref{sec:WHsis} we add to the list of known A-polynomials by giving explicit formulas for rational functions that contain the geometric factor of the A-polynomials for the twisted torus knots $T(5,-5n-14,2,2)$ and $T(5,5n+11,2,2)$ for $n\geq 1$.
Indeed, our results apply more broadly than this. Our main theorem, stated generally below, applies to any family of knots related by twisting. In fact, our methods also apply to one-cusped manifolds more generally, but we restrict our focus to knots in the 3-sphere. 

\begin{theorem}\label{thm:Apoly_general}
    Let $K_{\pm m}$ be the sequence of knots obtained by performing $\pm 1/m$ Dehn fillings on an unknotted component of a two-component link in $S^3$. Then, for sufficiently large $m$, the {A-polynomial} of  $K_{\pm m}$ may be defined by a finite number of fixed polynomial equations corresponding to the parent link and a single polynomial equation depending on $m$ that corresponds to the Dehn filling.
\end{theorem}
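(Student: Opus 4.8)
The plan is to work throughout with Thurston's deformation variety and Champanerkar's method for extracting the geometric factor, so that the Dehn filling becomes the adjunction of a single equation rather than the adjunction of tetrahedra. Write $\mathcal{L} = K \cup C$ for the parent link, with $C$ the unknotted component to be filled, and let $X = S^3 \setminus \nu(\mathcal{L})$ be its complement. First I would fix a geometric ideal triangulation of $X$ into tetrahedra with shape parameters $z_1,\dots,z_t$ and record Thurston's gluing equations together with the cusp equations. The gluing equations and the cusp equations for the $K$-cusp express the meridian and longitude eigenvalues $M, L$ (the A-polynomial variables) as fixed monomials in the $z_i$, and the cusp equations for the $C$-cusp likewise express its eigenvalues $m_C, \ell_C$ as fixed monomials. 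Crucially, every one of these equations is determined by the triangulation of $X$ and is therefore independent of $n$; this is the ``finite number of fixed polynomial equations corresponding to the parent link.''

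Next I would encode the $1/n$ filling on $C$ as one extra equation. Performing $1/n$ surgery kills the slope $\mu_C + n\lambda_C$, so on the deformation variety the completed structure is characterised by requiring the holonomy of this slope to be trivial; since peripheral holonomies commute, in eigenvalue form this reads
\[
    m_C\, \ell_C^{\,n} = 1
\]
(up to sign in $\mathrm{PSL}(2,\mathbb{C})$). As $m_C$ and $\ell_C$ are the fixed monomials from the previous step, this is a single polynomial equation whose only dependence on $n$ is the exponent on $\ell_C$; after clearing denominators it is the ``single polynomial equation depending on $n$ that corresponds to the Dehn filling.'' The subvariety cut out inside the deformation variety of $X$ by adjoining this equation parametrises incomplete structures on $X$ whose metric completion is the complement of $K_n$, with the core of the filling solid torus becoming a geodesic. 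This is the whole point of the construction: instead of triangulating the growing twist region by a ladder of roughly $n$ tetrahedra and carrying the $n$ associated gluing equations, the entire $n$-dependence is absorbed into one equation.

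Finally I would pass back to the A-polynomial. By Thurston's hyperbolic Dehn surgery theorem, for all sufficiently large $|n|$ the knot $K_n$ is hyperbolic and the complete structure on $X$ deforms to a solution of the augmented system near the complete shapes; hence the geometric component persists and the shapes stay away from the degenerate locus. Champanerkar's theorem then guarantees that eliminating $z_1,\dots,z_t$ from the gluing equations, the $K$-cusp equations, and the filling equation yields a polynomial in $M$ and $L$ that contains the geometric factor of the A-polynomial of $K_n$. Since all of the input equations except the filling equation are fixed, this is exactly the asserted description.

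The main obstacle is this last step: ensuring that for large $|n|$ the elimination genuinely recovers the geometric factor rather than degenerating. One must verify that the complete solution of $X$ survives as an honest, non-degenerate solution of the filled system and lies on the component mapping to the geometric component of the deformation variety of $K_n$, so that no re-triangulation is forced and Champanerkar's divisibility result applies. This is precisely what the ``sufficiently large $|n|$'' hypothesis buys, via hyperbolic Dehn surgery, and it is the step where the geometry, rather than the algebra, does the real work; confirming that the filling equation is the only $n$-dependent and genuinely polynomial relation is then a routine check.
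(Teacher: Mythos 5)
Your argument reaches the theorem by a genuinely different route from the paper's. You never re-triangulate the filled manifold: you keep the parent link's ideal triangulation with its (fixed) gluing and cusp equations, and you absorb the entire $n$-dependence into the surgery eigenvalue relation $m_C\ell_C^{\,n}=\pm 1$, with Thurston's hyperbolic Dehn surgery theorem supplying, for large $|n|$, a non-degenerate solution near the complete shapes. The paper instead triangulates $S^3\setminus K_n$ honestly, gluing a layered solid torus onto the parent triangulation, so it faces a system of Ptolemy equations whose size grows linearly with $n$; its proof of the precise statement (Theorem~\ref{thm:Apoly_precise}) is then purely algebraic and combinatorial: by the Laurent-phenomenon formula of Theorem~\ref{thm:laurent_precise}, proved via perfect matchings of weighted ladder graphs, the $n$ tail equations together with the folding equation are \emph{equivalent} to the single explicit equation $H_n-\gamma_{f_k}^{n-1}\gamma_{o_k}^{n}\gamma_{p_k}=0$. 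Consequently, in the paper ``sufficiently large $|n|$'' is a combinatorial condition (the walk in the Farey triangulation has a fixed body and a growing tail, with the tip in the tail's direction), hyperbolic Dehn surgery is never invoked, and the geometric meaning of the equations is inherited from Theorem 2.58 and Corollary 2.59 of~\cite{HMP} applied to an honest triangulation of the filled manifold. What your route buys: brevity, classical machinery, and applicability to arbitrary filling slopes. What the paper's route buys: an explicit Laurent-polynomial form of the $n$-dependent equation, which is precisely what powers the cluster-algebra results (Theorems~\ref{thm:laurent_precise} and~\ref{thm:intnos_precise}) and the worked examples in Section~\ref{sec:examples}; your monomial relation with $n$ sitting in an exponent has the implicit flavour of Garoufalidis's result~\cite{Ga14}, which the paper is deliberately sharpening into explicit formulas.

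One caveat you should patch if you carry your argument out in full: Champanerkar's theorem~\cite{Champanerkar:Thesis} applies to the gluing-plus-cusp system of an ideal triangulation of the one-cusped manifold itself; it does not literally cover your hybrid system (parent triangulation plus an eigenvalue condition on the filling slope). The divisibility of the eliminant by the geometric factor of $K_n$ therefore needs the local argument you only gesture at in your final paragraph: near the surgery solution the eigenvalue equation $m_C\ell_C^{\,n}=\pm1$ coincides with the log-holonomy equation $u_C+nv_C=2\pi i$, so nearby solutions give representations of $\pi_1(X)$ that send the filling slope to the identity, hence factor through $\pi_1(S^3\setminus K_n)$ and sweep out an open subset of the geometric component; only then does the projection to $(M,L)$ contain a Zariski-dense subset of the zero set of the geometric factor. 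This is standard Neumann--Zagier-style reasoning, but it is a step to be written, not a citation to be made.
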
 
This is stated precisely in Corollary~\ref{cor:Apoly_precise}.

\subsection{Connections to cluster algebras}\label{sec:Cluster} 

Howie, Mathews and Purcell~\cite{HMP} performed a change of basis on the equations used in Champanerkar's method~\cite{Champanerkar:Thesis} that is similar to work of Dimofte~\cite{DimofteQRCS}. When they studied the resulting equations in the context of knots related by Dehn filling, they observed that the equations corresponding to the Dehn filling are reminiscent of \textit{exchange relations} in a cluster algebra. A \textit{cluster algebra} is a commutative ring for which generators and relations are not defined at the outset. Instead, \textit{cluster variables} are defined inductively using a process called \textit{mutation}. Cluster variables belong to sets called \textit{clusters} and any two overlapping clusters are related by an exchange relation that replaces one cluster variable with a new one. 

Cluster algebras were first defined by Fomin and Zelevinsky in the early 2000s when they were studying dual canonical bases and total positivity in semisimple Lie groups~\cite{FZ:partone}. Since then, applications of cluster algebras have been found in a wide range of contexts, including quiver representations, discrete  dynamical systems, tropical geometry, and Teichm\"uller theory~\cite{FZ:conference}. One intriguing property of a cluster algebra, known as the \textit{Laurent phenomenon}, is that every cluster variable can be written as an integer Laurent polynomial in the initial cluster variables~\cite{FZ:partone}. 

A cluster algebra may be of either \textit{finite} or \textit{infinite type}, depending on the number of clusters they contain. The simplest cluster algebra of infinite type~\cite{CZ06} can be defined using the initial cluster $\{ x_1, x_2\}$ and the exchange relation 
\[ x_{i-1} x_{i+1}= x_{i}^2+1. \]  
The equations of Howie, Mathews and Purcell are comparable to this exchange relation, where we instead use variables $\gamma_s$ corresponding to edge classes in the triangulation, indexed by their slope $s$. With this comparison in mind we may exploit what is known about cluster algebras. In particular, we may adapt a formula that exists for all of the cluster variables in the simplest cluster algebra of infinite type. There are three distinct proofs of this formula, given by Caldero and Zelevinsky~\cite{CZ06}, Musiker and Propp~\cite{MP06}, and Zelevinsky~\cite{Zel07}. We use arguments similar to Musiker and Propp to prove the following result. 

\begin{theorem}\label{thm:laurentgen}
    The single polynomial equation of Theorem~\ref{thm:Apoly_general} corresponding to the Dehn filling can be used to express the variable $\gamma_{h}$ as an integer Laurent polynomial in the variables $\gamma_{f},\gamma_{o},\gamma_{p}$.
\end{theorem}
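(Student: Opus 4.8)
The plan is to establish the Laurent phenomenon by induction, mimicking the combinatorial strategy of Musiker and Propp for the simplest cluster algebra of infinite type. The key structural input is that the Dehn-filling equation relates the edge variables $\gamma_s$ across successive slopes $s$ in a manner analogous to the exchange relation $x_{n-1}x_{n+1}=x_n^2+1$. First I would write down this Dehn-filling equation explicitly from the change-of-basis equations of Howie, Mathews and Purcell, identifying $\gamma_h$ as the variable being introduced (``mutated in'') in terms of the neighbouring slopes, and isolating the three base variables $\gamma_f,\gamma_o,\gamma_p$ that play the role of the initial cluster. The equation should take the form of a single polynomial in which $\gamma_h$ appears together with a product of previously-defined $\gamma$'s, so that solving for $\gamma_h$ expresses it as a rational function whose denominator is a monomial in the base variables.

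Next I would set up the induction. The base case records $\gamma_f,\gamma_o,\gamma_p$ themselves (and possibly one further variable) as trivially Laurent. For the inductive step, suppose every $\gamma_s$ for slopes between the base and $h$ has already been shown to be a Laurent polynomial in $\gamma_f,\gamma_o,\gamma_p$; then the exchange-type relation expresses $\gamma_h$ as
\[
\gamma_h = \frac{P(\gamma_{\text{prev}})}{\gamma_{\text{prev}'}},
\]
where the numerator $P$ is a polynomial in earlier edge variables and the denominator is a single earlier variable. Substituting the inductive Laurent expressions into $P$ yields a Laurent polynomial in $\gamma_f,\gamma_o,\gamma_p$ in the numerator; the content of the argument is to show that this numerator is divisible by the denominator $\gamma_{\text{prev}'}$ as a Laurent polynomial, so that no new irreducible factors are introduced into the denominator. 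Integrality of the coefficients follows from the fact that the exchange relation has integer coefficients and division of Laurent polynomials over $\mathbb{Z}$ preserves integrality once divisibility is known.

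The combinatorial heart, following Musiker and Propp, is to encode the successive substitutions by a model (such as a family of perfect matchings of snake-like graphs, or a continued-fraction/tridiagonal recursion) whose generating function manifestly produces the desired Laurent polynomial. I would verify that the recurrence coming from the Dehn-filling equation matches the recurrence satisfied by this generating function, so that the divisibility required in the inductive step holds automatically rather than by direct polynomial long division. This translation is what upgrades the bare recurrence into a proof of the Laurent and integrality properties simultaneously.

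The main obstacle I anticipate is the divisibility step: unlike the textbook cluster relation $x_{n-1}x_{n+1}=x_n^2+1$, the Dehn-filling equation here involves the extra variables $\gamma_f,\gamma_o,\gamma_p$ and may carry nontrivial coefficient polynomials coming from the fixed parent-link equations, so the cancellation that guarantees a monomial denominator is not immediate. Establishing that the relevant numerator is genuinely divisible by the exchanged variable—equivalently, that the matching/recurrence model faithfully reproduces the algebra's relation—is where the careful work lies, and it is the step most sensitive to the precise form of the change of basis. Everything else (the induction skeleton, integrality, the identification of the base cluster) should follow in a routine manner once this cancellation is secured.
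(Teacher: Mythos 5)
Your proposal takes essentially the same route as the paper: the paper proves the precise version (Theorem~\ref{thm:laurent_precise}) by exactly this plan, rewriting the tail equations as the three-term relation $\gamma_{h_{j+1}}\gamma_{h_{j-1}}=\gamma_{h_j}^2-\gamma_{p_k}^2$, realizing the candidate Laurent numerator $H_n$ as the weighted perfect-matching polynomial of a $2\times 2n$ ladder graph in the style of Musiker--Propp, and securing the divisibility in the induction via the matching-derived identity $H_m\cdot H_{m-2}=H_{m-1}^2-(\gamma_{f_k}^{m-2}\gamma_{o_k}^{m-1}\gamma_{p_k})^2$. The one obstacle you flag does not in fact arise: the statement concerns only the tail of the layered solid torus, where the Ptolemy equations are self-contained (the pivot variable stays equal to $\gamma_{p_k}$ throughout the tail and the parent-link equations never enter), so the cancellation is precisely the clean cluster-type one you hoped for.
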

This is stated precisely in Theorem~\ref{thm:laurent_precise}. Note that $f,h,o,p$ are specific slopes that will be defined in due course. 

In the context of cluster algebras associated with triangulations of surfaces, Fomin, Shapiro and Thurston proved that the cluster variables carry information about certain intersection numbers~\cite{FominShapiroThurston08}. In particular, the exponents of the terms in the denominator of the Laurent polynomial are equal to intersection numbers in the corresponding triangulation (see Theorem 8.6 in~\cite{FominShapiroThurston08} for details). We show that a similar result applies in our context with the intersection numbers arising from the Farey triangulation. 
To state the following result we use the fact that each cluster variable can be associated to a rational number (or infinity) and hence to an ideal vertex in the Farey triangulation. 
\begin{theorem}
    Let $\alpha_s$ be a geodesic in $\mathbb{H}^2$ with endpoints labelled by the slopes $h$ and $s$. The exponent of $\gamma_{s}$ in the denominator of the Laurent polynomial for $\gamma_h$ (as in Theorem~\ref{thm:laurentgen}) is equal to the intersection number of $\alpha_s$ with edges in the Farey triangulation of $\mathbb{H}^2$.
\end{theorem}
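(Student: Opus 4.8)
The plan is to prove the statement by comparing two quantities that can each be computed explicitly as functions of the slope $s$: the exponent of $\gamma_s$ read off from the Laurent polynomial supplied by Theorem~\ref{thm:laurentgen}, and the Farey intersection number of $\alpha_s$. First I would invoke Theorem~\ref{thm:laurentgen} to write $\gamma_h = P(\gamma_f,\gamma_o,\gamma_p)\big/\bigl(\gamma_f^{e_f}\gamma_o^{e_o}\gamma_p^{e_p}\bigr)$ in lowest terms, so that the three denominator exponents $e_f,e_o,e_p$ are well defined. Since the proof of Theorem~\ref{thm:laurentgen} follows Musiker and Propp~\cite{MP06}, the numerator $P$ is a sum over combinatorial objects (perfect matchings of the graphs they use) all sharing the common denominator monomial, and the exponents $e_f,e_o,e_p$ are therefore determined by the number of tiles of each type. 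The first task is to record these exponents as explicit functions of the parameter $n$ that indexes the slope $h=h_n$.

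The second task is to compute the Farey side. Writing each slope as a reduced fraction, say $h=a/b$ and $s=c/d$, I would use the standard fact that the geodesic between two ideal vertices $a/b$ and $c/d$ crosses exactly $|ad-bc|-1$ edges of the Farey tessellation. This can be established by induction on $|ad-bc|$ using the mediant (Stern--Brocot) structure: Farey neighbours are precisely the pairs with $|ad-bc|=1$, for which the geodesic is itself an edge and crosses none, and each unit decrease of the determinant removes exactly one crossing. Applying this with $s\in\{f,o,p\}$ and $h=h_n$ yields three explicit intersection numbers, again as functions of $n$.

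It then remains to check that these two triples of functions coincide. I expect both to satisfy the same second-order linear recurrence in $n$: on the algebraic side the exchange relation of the form $\gamma_{h_{n-1}}\gamma_{h_{n+1}}=\gamma_{h_n}^2+\cdots$ forces the relation $e_{n+1}=2e_n-e_{n-1}$ on the denominator exponents (once no cancellation is confirmed, see below), while on the geometric side the numerators $a_n,b_n$ of $h_n$ obey the Stern--Brocot recurrence, so the determinants $|a_nd-b_nc|$ do as well. Matching the small base cases for $h$ directly and then inducting closes the argument. Alternatively, staying within the combinatorial model of~\cite{MP06} throughout, one can set up a bijection between the tiles of the graph met in the $s$-direction and the Farey edges crossed by $\alpha_s$, which identifies the two counts in a single step.

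The main obstacle, exactly as in the analogous Theorem~8.6 of Fomin, Shapiro and Thurston~\cite{FominShapiroThurston08}, is ensuring that the Laurent expansion is genuinely in lowest terms, i.e.\ that the numerator $P$ is divisible by none of $\gamma_f,\gamma_o,\gamma_p$, so that the exponent is really $e_s$ and not something smaller after cancellation. This is precisely what pins the denominator degree to the intersection number on the nose and accounts for the $-1$ in $|ad-bc|-1$. I would handle it by exhibiting, for each $s$, a single distinguished (extremal) matching whose monomial is coprime to $\gamma_s$; the remaining bookkeeping---tracking the base cases and the $\pm1$ corrections through the recurrence---is then routine.
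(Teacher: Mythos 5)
There is a genuine gap: the ``standard fact'' you invoke --- that the geodesic between $a/b$ and $c/d$ crosses exactly $|ad-bc|-1$ edges of the Farey tessellation --- is false. Take $a/b=1/0$ and $c/d=2/5$: here $|ad-bc|=5$, but the vertical geodesic at $x=2/5$ crosses only the three Farey edges $(0,1)$, $(0,\tfrac12)$ and $(\tfrac13,\tfrac12)$ before terminating in the triangle $(\tfrac13,\tfrac25,\tfrac12)$. The number of Farey edges crossed is governed by the continued fraction expansion of the endpoints (roughly, the sum of the partial quotients minus one), not by the determinant; the quantity $|ad-bc|$ is instead the minimal intersection number of the two slopes viewed as curves on the torus, which is a different count. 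Consequently your proposed induction (``each unit decrease of the determinant removes exactly one crossing'') cannot be repaired, and the geometric half of your argument collapses; the recurrence-matching step would then be comparing the algebraic exponents against the wrong numbers.

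What saves the statement is that it only requires crossing numbers for very special pairs, and this is how the paper proceeds: it applies an isometry of $\mathbb{H}^2$ so that $f_k=1/0$, $o_k=-1/1$, $p_k=0/1$ and $h_{k+n-1}=1/n$, then computes directly, by induction on the tail length $n$, that $\alpha_{f_k}$ crosses $n-1$ Farey edges, $\alpha_{o_k}$ crosses $n$, and $\alpha_{p_k}$ crosses $0$ (each increment of the tail adds exactly one crossing to the first two geodesics and none to the third). These match the exponents $n-1$, $n$, $0$ read off from the denominator $\gamma_{f_k}^{n-1}\gamma_{o_k}^{n}$ in Theorem~\ref{thm:laurent_precise}. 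It so happens that for these particular pairs your determinant formula returns the correct values ($n-1$, $(n+1)-1$, $1-1$), which is why your final numbers would agree --- but that is an accident of the slopes involved, not a consequence of a valid general lemma. Your algebraic side is fine, and your concern about lowest terms is a legitimate point the paper leaves implicit; it is settled by noting that $H_n$ is divisible by none of $\gamma_{f_k},\gamma_{o_k},\gamma_{p_k}$ (for instance, setting $\gamma_{f_k}=0$ in $H_n$ leaves a nonzero polynomial). So the fix is to discard the general lemma and substitute the direct computation for the normalised slopes above.
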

This is stated precisely in Theorem~\ref{thm:intnos_precise}.

\subsection{Structure of this paper}
In Section~\ref{sec:background} we outline some relevant background, first summarising the work of Howie, Mathews and Purcell, then presenting definitions and results from combinatorics that play a role in our main proofs. Precise statements of our results are given in Section~\ref{sec:results}, along with their proofs, which rely heavily on perfect matchings of appropriately weighted ladder graphs. We present the results that have connections to cluster algebras first, then apply these to the context of A-polynomial calculations. We end in Section~\ref{sec:examples} with examples of how our method can be used to explicitly compute A-polynomials for two families of knots related by twisting: the twisted torus knots $T(5,1-5n,2,2)$, and the twist knots $J(2,2n)$.

\subsection{Acknowledgements}
This research was supported by an Australian Government Research Training Program (RTP) Scholarship. The author thanks Jessica Purcell and Daniel Mathews for their support and guidance. The author is also very grateful to Josh Howie, Stephan Tillmann and Norm Do for giving valuable feedback on a draft of the paper, and to the referee for their helpful comments that greatly improved its exposition.

\section{Background}\label{sec:background}
In this section we review the method for calculating A-polynomials described by Howie, Mathews and Purcell in~\cite{HMP}, including the construction of a layered solid torus and its relationship to the Farey triangulation. We also summarise some relevant combinatorial concepts that appear in later proofs.

\subsection{The A-polynomial from Ptolemy equations}\label{sec:Ptolemy} 
In Champanerkar's work~\cite{Champanerkar:Thesis}, the A-polynomial is defined by the set of gluing equations and cusp equations for an ideal triangulation of a knot complement. This information can be stored in the \textit{Neumann-Zagier (NZ) matrix}~\cite{NeumannZagier}. Neumann and Zagier showed that this matrix exhibits symplectic properties~\cite{NeumannZagier} and in 2013, Dimofte~\cite{DimofteQRCS} used this symplectic structure to perform a change of basis. The result of this is a set of equations, one per tetrahedron, that defines the deformation variety. 

Howie, Mathews and Purcell~\cite{HMP} analysed the equations resulting from Dimofte's change of basis and observed Ptolemy-like structure similar to the equations defining Goerner and Zickert's \textit{enhanced Ptolemy variety}~\cite{GoernerZickert}. In addition, they observed that the equations corresponding to Dehn fillings were particularly simple and were reminiscent of the exchange relations in a cluster algebra (see Section~\ref{sec:Cluster}).

\subsubsection{Layered solid tori and the Farey triangulation}

Howie, Mathews and Purcell were particularly interested in the behaviour of the Ptolemy-like equations corresponding to Dehn fillings. To perform Dehn fillings on triangulated link complements they used \textit{layered solid tori}. Layered solid tori were originally introduced by Jaco and Rubinstein in~\cite{JacoRubinstein:LST} but the construction used here more closely resembles the work of Guerit\'aud and Schleimer~\cite{GueritaudSchleimer}. 

To Dehn fill one cusp of a two-component link complement using a layered solid torus, the link complement must have an ideal triangulation in which only two ideal vertices from two distinct tetrahedra meet the cusp to be filled. Howie, Mathews and Purcell show that this is always possible in Proposition 5.1 of~\cite{HMP}. Given such a triangulation, we may remove the two tetrahedra meeting the cusp, leaving a once-punctured torus boundary component. We glue the layered solid torus to this once-punctured torus boundary. Figure~\ref{fig:schematic} shows this process schematically.

\begin{figure}
    \centering
    \includegraphics[width=0.8\textwidth]{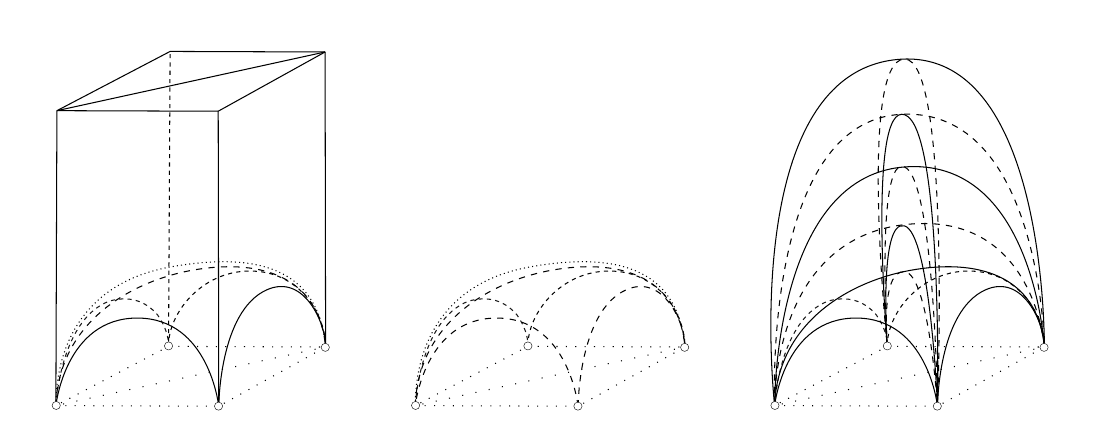}
    \caption{Left: Only two tetrahedra meet the cusp to be filled (located at the point at infinity). Centre: Removing the two tetrahedra leaves a once-punctured torus boundary. Right: Tetrahedra are layered onto the once-punctured torus boundary. Then a fold across an edge (not shown) closes the layered solid torus, thus performing the Dehn filling.}
    \label{fig:schematic}
\end{figure}

To begin constructing the layered solid torus, we glue two adjacent faces of an ideal tetrahedron to the once-punctured torus boundary. Note that this does not change the topology of the link complement but it does introduce a new once-punctured torus boundary with a different triangulation. The new boundary triangulation shares two edges with the previous one, while the third edge is flipped (see Figure~\ref{fig:diagexch}). This is referred to as a \textit{diagonal exchange}.

\begin{figure}
    \centering
    \includegraphics[width=0.5\textwidth]{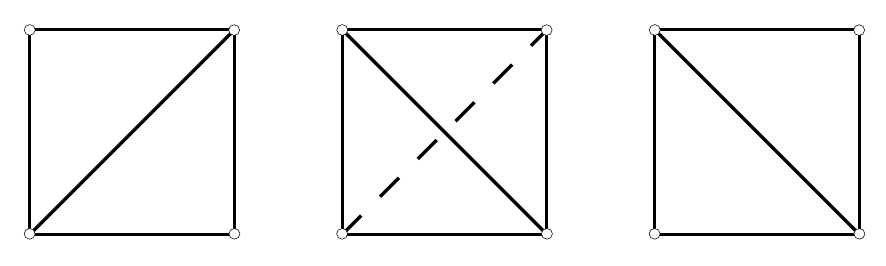}
    \caption{Left: the triangulation on the original boundary. Centre: the ideal tetrahedron glued to the boundary. Right: the triangulation on the new boundary.}
    \label{fig:diagexch}
\end{figure}

We continue layering ideal tetrahedra onto the boundary until the desired boundary triangulation is obtained.\footnote{It is possible to define degenerate layered solid tori consisting of either no tetrahedra or one tetrahedron but we will not need these constructions here. Descriptions of these can be found in~\cite{HMP}.} At this point, the tetrahedra we have introduced form a complex that is homotopy equivalent to a thickened once-punctured torus. To form a solid torus we close up the inner-most layer by identifying the two exposed ideal triangles. This can be seen as folding across one of the exposed edges. The tetrahedra that have been introduced now form a solid torus in which a particular edge is homotopically trivial.  

Importantly, this construction allows boundary curves with any rational slope to be made homotopically trivial. The original boundary triangulation consists of three ideal edges, each with a well-defined slope in terms of the meridian and longitude of the torus boundary. As a tetrahedron is added, the diagonal exchange introduces a new edge with a different slope. However, there are only three possible slopes that the new edge may have, depending on which edge is covered by the diagonal exchange. This behaviour is well-understood and is captured by the structure of the Farey triangulation.

The \textit{Farey triangulation} is an ideal triangulation of $\mathbb{H}^2$, with edges connecting vertices labelled by rational slopes $a/b$ and $c/d$ whenever $|ad-bc|=1$ (see Figure~\ref{fig:Farey}). Since one-vertex triangulations of the torus consist of three edges whose pairwise intersection number is one, each triangle in the Farey triangulation corresponds to a triangulation of the once-punctured torus (for more on this correspondence see, for example, Section 3.1 of~\cite{FutGue06}). In particular, the boundary triangulations seen during the construction of a layered solid torus each correspond to a triangle in the Farey triangulation. Moreover, since consecutive boundary triangulations only differ by a diagonal exchange, they appear as adjacent triangles in the Farey triangulation. As a result, we may use a walk in the Farey triangulation to encode the construction of a layered solid torus. 

\begin{figure}
    \centering
    \includegraphics[width=0.45\textwidth]{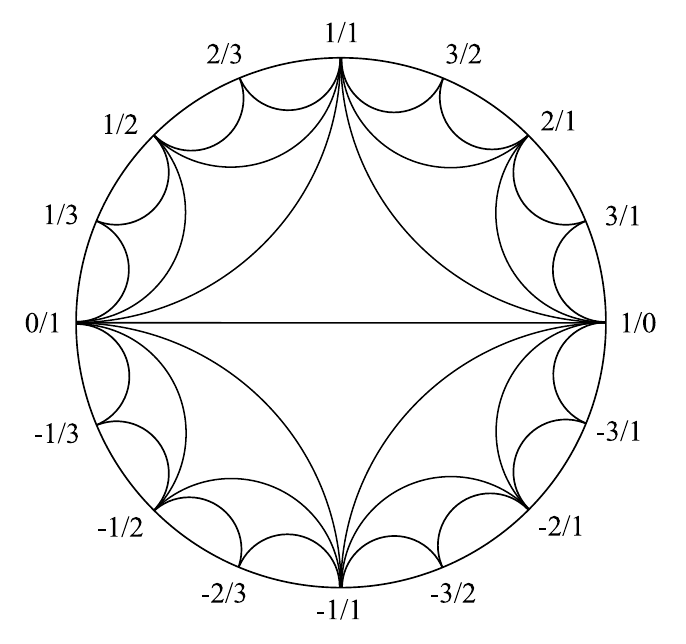}
    \caption{The Farey triangulation of $\mathbb{H}^2$ using the Poincar\'e disk model.}
    \label{fig:Farey}
\end{figure}

A walk in the Farey triangulation passes through a sequence of triangles. We label these triangles $T_0, T_1,\dots, T_{N+1}$ and refer to the step between $T_k$ and $T_{k+1}$ as the $k^{th}$ step. In the construction of a layered solid torus we never perform a diagonal exchange on an edge that was introduced by the previous layer.\footnote{An astute reader may have noticed that the complex in the right of Figure~\ref{fig:schematic} disobeys this rule!} This rule ensures that the corresponding walk in the Farey triangulation contains no backwards steps. Therefore, once $T_0$ and $T_1$ have been identified, all subsequent steps may be viewed as either a left step or a right step. As such, the construction of a layered solid torus can be completely described by the initial information $T_0,T_1$ along with a sequence of left and right steps. Note that the step from $T_N$ to $T_{N+1}$ corresponds to the folding that closes the layered solid torus, rather than the addition of a new tetrahedron. 

\begin{definition}[Anatomy of a layered solid torus]\label{def:tail}
    Let $W$ be a word in L's and R's describing the sequence of left and right steps in the construction of a layered solid torus $X$. 
    \begin{itemize}
        \item The final letter in $W$, corresponding to the fold in $X$, is the \textit{tip of} $W$.
        \item The maximal string of either L's or R's immediately preceding the tip of $W$ is the \textit{tail of} $W$ and the corresponding tetrahedra form the \textit{tail of} $X$.
        \item The string of L's and R's in $W$ preceding the tail of $W$ form the \textit{body of} $W$ and the corresponding tetrahedra form the \textit{body of} $X$.
        \item The tetrahedron in $X$ that corresponds to the $0^{th}$ step is the \textit{head of} $X$.
    \end{itemize}
\end{definition}

\begin{remark}
    When referring to the length of a walk that describes the construction of a layered solid torus (that is, including the head, body, tail and tip) we use $N$, whereas when only considering the length of a tail we use $n$.
\end{remark}

\subsubsection{Ptolemy equations corresponding to a layered solid torus}

Let us now establish notation for the slopes in a layered solid torus with reference to the corresponding walk in the Farey triangulation.  Our notation differs to that used in~\cite{HMP}, where slopes are labelled according to the \textit{absolute} direction of the associated step (that is, using \textit{port} for the slope to the left and \textit{starboard} for the slope to the right). Here we label slopes according to the direction of the associated step \textit{relative} to the previous step. For the $k^{th}$ step, we label the \textit{old} slope $o_k$ and the slope we are \textit{heading} towards $h_k$, as in~\cite{HMP}. Knowing the $(k-1)^{st}$ step, we label the slope that the $k^{th}$ step \textit{pivots} around $p_k$ and the slope that \textit{fans} out $f_k$ (as in Figure~\ref{fig:slopelabels}, right). For the initial step, the old and heading slopes are labelled $o_0$ and $h_0$, respectively. However, because there is no previous step, the pivot and fan slopes are ill-defined. Hence, for this step we declare the slope to the left in the Farey triangulation to be $f_0$ and the slope to the right to be $p_0$ (see Figure~\ref{fig:slopelabels}, left). Note that the labelling of the initial step is as though it were a right step.

\begin{figure}
    \centering
    \includegraphics[width=0.9\textwidth]{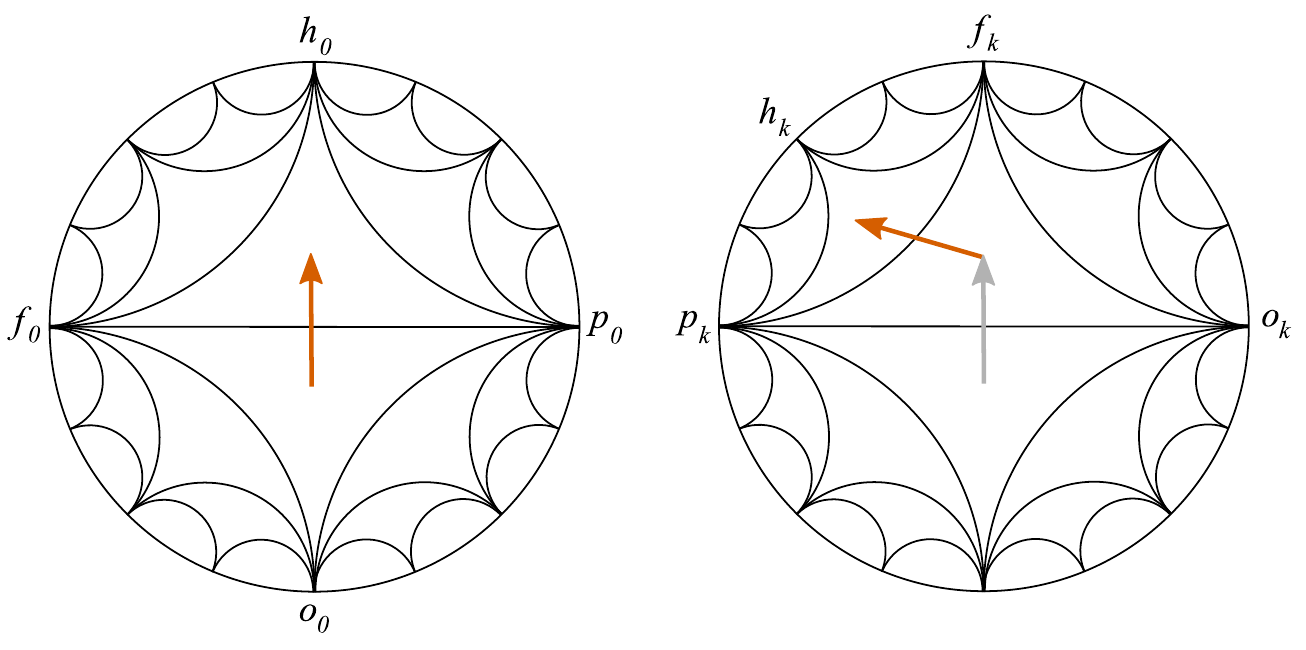}
    \caption{Left: Slope labels for the initial step. Right: Slope labels for the $k^{th}$ step.}
    \label{fig:slopelabels}
\end{figure}

Following Howie, Mathews and Purcell, we assign $\gamma$ variables to each edge class in the triangulation and label these variables by the slope of the edge.  There are two formats we use, depending on the context. When referring to a slope $s_k$ associated to the $k^{th}$ step in the construction of a layered solid torus, we use the notation $\gamma_{s_k}$. When the actual slope is known, as is the case throughout Section~\ref{sec:examples}, we use the notation $\gamma_{p/q}$ for the edge with slope $p/q$.

Here we restate Theorem 3.17(ii) of~\cite{HMP} using the relative labelling of slopes discussed above. 

\begin{theorem}[Howie, Mathews \& Purcell, Theorem 3.17(ii) of~\cite{HMP}]\label{thm:HMP}
With slopes labelled according to the corresponding walk of length $N$ in the Farey triangulation, the Ptolemy equations for the tetrahedra in a layered solid torus are 
\[ \gamma_{o_k}\gamma_{h_k}+\gamma_{p_k}^2-\gamma_{f_k}^2=0, \text{ for } 0\leq k\leq N-1. \]
\end{theorem}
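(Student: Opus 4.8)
The plan is to reduce the statement to a single tetrahedron and combine two ingredients: the signed Ptolemy relation for one ordered ideal tetrahedron, which I would take from the general framework behind the theorem (Dimofte's symplectic change of basis on the Neumann--Zagier data, equivalently the enhanced Ptolemy variety of Goerner and Zickert), and a purely combinatorial identification of the six edges of each layered tetrahedron with the four slopes $o_k,h_k,p_k,f_k$. Since the $\gamma$ variables are indexed by edge class, and hence by slope, the whole content is to determine which edges are identified to which slope and then to substitute into the one-tetrahedron relation.

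First I would fix the edge-to-slope dictionary for a diagonal exchange. The $k^{\text{th}}$ tetrahedron is glued across two faces to the once-punctured torus so as to replace the diagonal of slope $o_k$ by the diagonal of slope $h_k$. Modelling the boundary as a quadrilateral fundamental domain $ABCD$, the two diagonals $AC$ and $BD$ realise the slopes $o_k$ and $h_k$ and form a pair of opposite edges of the tetrahedron. The four sides $AB,BC,CD,DA$ are identified in the once-punctured torus by the opposite-side gluings $AB\sim CD$ and $BC\sim DA$; in the tetrahedron each of these is again a pair of opposite edges, and the two resulting edge classes realise precisely the two retained slopes $p_k$ and $f_k$. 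Hence the three pairs of opposite edges carry the variables $\{\gamma_{o_k},\gamma_{h_k}\}$, $\{\gamma_{p_k},\gamma_{p_k}\}$ and $\{\gamma_{f_k},\gamma_{f_k}\}$, which already forces the quadratic shape of the equation.

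Next I would apply the signed Ptolemy relation for an ordered tetrahedron, $c_{03}c_{12}-c_{02}c_{13}+c_{01}c_{23}=0$, whose three products run over the pairs of opposite edges with alternating signs. Choosing the vertex ordering induced by the construction so that $\{o_k,h_k\}$ becomes the $\{03,12\}$ pair, the pivot pair becomes the $\{01,23\}$ pair, and the fan pair becomes the $\{02,13\}$ pair, the substitution $c_{ij}=\gamma_{\text{slope}(ij)}$ yields $\gamma_{o_k}\gamma_{h_k}+\gamma_{p_k}^2-\gamma_{f_k}^2=0$, exactly as claimed. The equality of the two variables on each repeated pair is automatic from the edge-class indexing, so what produces the specific signs is the alternation in the ordered Ptolemy relation together with the assignment of pivot versus fan to the $+$ and $-$ positions.

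The main obstacle is precisely this sign bookkeeping: I must check that the vertex ordering coming from the orientation of the layered solid torus genuinely places the pivot slope in the $+$ position and the fan slope in the $-$ position, and that it does so uniformly over $k$. This is where the intrinsic distinction between pivot and fan (rather than left and right) does the work, since $p_k$ and $f_k$ are defined by the step itself; I would verify the left-step and right-step cases separately to confirm that both give the same equation once the ordering is tracked, and then treat the initial step $k=0$ using the stated convention that it is labelled as though it were a right step. A secondary point to confirm is that the general one-tetrahedron equation really takes the signed Ptolemy form above after Dimofte's change of basis; granting that, which is the content of the general part of the cited framework, the remainder is the combinatorial substitution described here.
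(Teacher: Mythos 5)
This theorem is not proved in the paper at all: it is quoted verbatim as Theorem 3.17(ii) of Howie--Mathews--Purcell, so there is no internal argument to compare against. Judged on its own terms, your proposal has the right combinatorial core but two genuine gaps. The edge-to-slope dictionary you set up is correct: in a diagonal-exchange tetrahedron the opposite-edge pairs carry $\{o_k,h_k\}$, $\{p_k,p_k\}$ and $\{f_k,f_k\}$, which forces the shape $\gamma_{o_k}\gamma_{h_k}\pm\gamma_{p_k}^2\pm\gamma_{f_k}^2=0$. But your starting point --- that the one-tetrahedron equation produced by Dimofte's change of basis is the bare signed Ptolemy relation $c_{03}c_{12}-c_{02}c_{13}+c_{01}c_{23}=0$ --- is not what the HMP framework gives. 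In that framework each tetrahedron's Ptolemy equation carries monomial coefficients in the meridian and longitude variables, coming from how the cusp curves pass through the cusp triangles of that tetrahedron; this is visible in this very paper, e.g.\ equation \eqref{eq:tet0}, $M\gamma_{1/0}\gamma_{4/1} - M^2\gamma_{4/1}\gamma_{3/1} - L\gamma_{3/1}^2 =0$, and equations \eqref{eq:1WHlink}--\eqref{eq:3WHlink}, where the ``outside'' tetrahedra pick up nontrivial powers of $L$ and $M$. The layered solid torus tetrahedra do meet the surviving cusp (the puncture of the once-punctured torus boundary lies on it), so a priori their equations could carry such coefficients too. A substantive part of the content of Theorem~\ref{thm:HMP} is precisely that for the layered tetrahedra these coefficients are all trivial, which requires tracking (or suitably choosing) the meridian and longitude paths in the cusp triangulation relative to the layered solid torus. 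Your proposal assumes this away silently.

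The second gap is the one you name yourself: the sign bookkeeping. You state that the pivot pair must land in the $+$ position and the fan pair in the $-$ position, uniformly over $k$ and for both left and right steps, and you say you ``would verify'' this case by case --- but that verification is exactly where the work lies (in HMP it involves a coherent choice of the per-tetrahedron signs $\epsilon_j$ and a careful orientation analysis), and announcing a plan to check it is not the same as checking it. As it stands, your argument establishes only the quadratic shape of the equation, not the specific signs nor the absence of $L,M$ coefficients, and those two points are what distinguish the theorem from a generic Ptolemy-type relation.
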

When $k=N$ we pick up the \textit{folding equation} $\gamma_{p_N}=\gamma_{f_N}$.

\begin{remark}
    By labelling slopes according to their relative direction, we remove the need to distinguish between left and right steps (as in~\cite{HMP}). Observe that the Ptolemy equations for ${0\leq k\leq N-1}$ encompass those associated with the head, body and tail of the layered solid torus, while the folding equation corresponds to the tip.
\end{remark}

\subsection{Combinatorial tools}\label{sec:Comb} 

In this section we recall definitions and results from combinatorics that will be used in Section~\ref{sec:results}.
 A \textit{ladder graph} $L_r$, informally, is the graph that resembles a ladder with $r$ rungs. More formally, it is a graph on $2r$ vertices arranged in two rows of $r$ vertices, with edges connecting adjacent vertices in each row and column. A \textit{weighted graph} is a graph in which each edge is assigned a number or variable, called a \textit{weight}. A \textit{perfect matching} of a graph $G$ is a subset $S$ of edges in $G$ such that each vertex belongs to exactly one edge in $S$ (see Figure~\ref{fig:perfectmatching} for an example). The \textit{weight} $w(S)$ of a perfect matching is defined to be the product of the weights of its constituent edges. 

\begin{figure}[ht]
    \centering
    \includegraphics[width=0.5\textwidth]{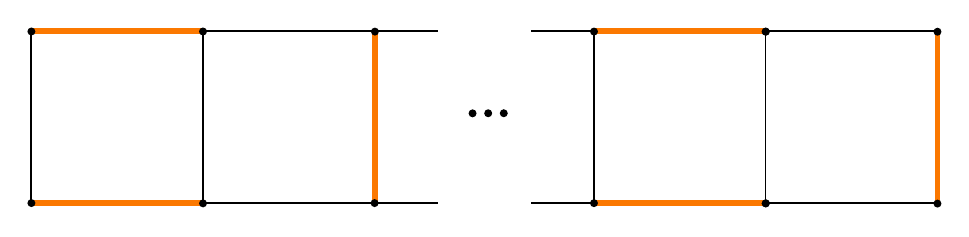}
    \caption{A perfect matching of the ladder graph $L_n$.}
    \label{fig:perfectmatching}
\end{figure}

In a perfect matching of a ladder graph, if one horizontal edge is included, then the horizontal edge directly above or below it must also be included. Notice that a perfect matching of a ladder graph is completely determined by which pairs of horizontal edges are contained in the perfect matching. Moreover, adjacent horizontal edges cannot be simultaneously included. Choosing a perfect matching of the ladder graph $L_r$ is therefore equivalent to choosing a subset of the integers $\left[1,r-1\right]$ without choosing any consecutive integers.

With this in mind we have the following combinatorial result, which is an important piece in a later proof.
\begin{theorem}[Musiker \& Propp, Theorem 3 of~\cite{MP06}]\label{thm:combin}
    The number of ways to choose a subset ${S\subset \{1,2,\dots, 2r-1\}}$ such that $S$ contains $\emph{a}$ odd elements, $\emph{b}$ even elements, and no consecutive elements is 
    \[ \binom{r-1-a}{b}\binom{r-b}{a}. \]
\end{theorem}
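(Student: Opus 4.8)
The plan is to prove the identity by a short chain of elementary, reversible substitutions that absorb the non-consecutiveness into new coordinates and turn the parity requirement into a clean even/odd split of a single multiset; the constrained count then factors as a product of two independent multiset selections, one recording the odd elements and one the even elements. I would deliberately avoid the tempting ``deletion'' argument that removes each chosen even together with a neighbouring unchosen odd: although natural, the number of admissible slots left for the evens depends on the precise arrangement of the chosen odds (through the lengths of the runs they form), so that approach does not separate into a product without an extra summation.

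First I would encode a non-consecutive subset $S=\{p_1<\dots<p_{a+b}\}\subseteq\{1,\dots,2n-1\}$ by its gap-removed image. Setting $q_j=p_j-(j-1)$ converts the condition $p_{j+1}-p_j\ge 2$ into $q_{j+1}>q_j$, so that $\{q_1<\dots<q_{a+b}\}$ is an arbitrary $(a+b)$-element subset of $\{1,\dots,2n-a-b\}$. A further shift $r_j=q_j-j$ turns this strictly increasing sequence into a weakly increasing one, $0\le r_1\le\dots\le r_{a+b}\le 2(n-a-b)$, that is, a size-$(a+b)$ multiset drawn from $\{0,1,\dots,2(n-a-b)\}$. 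Both substitutions are standard and visibly invertible, so together they give a bijection between the subsets being counted and such multisets.

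The key step is then a parity computation: since $p_j=r_j+2j-1$, we have $p_j\equiv r_j+1\pmod 2$, so $p_j$ is odd precisely when $r_j$ is even. Hence the requirement that $S$ contain $a$ odd and $b$ even elements becomes the requirement that the multiset $\{r_1,\dots,r_{a+b}\}$ have exactly $a$ even-valued and $b$ odd-valued entries. A multiset with a prescribed number of even and odd entries decomposes uniquely and freely into its even-valued sub-multiset and its odd-valued sub-multiset, so the count factors. I would finish with stars-and-bars: the even values in $\{0,\dots,2(n-a-b)\}$ number $n-a-b+1$, contributing $\binom{n-a-b+1+a-1}{a}=\binom{n-b}{a}$, while the odd values number $n-a-b$, contributing $\binom{n-a-b+b-1}{b}=\binom{n-1-a}{b}$; multiplying gives the claimed formula.

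The main obstacle is conceptual rather than computational: recognising that one should move the non-consecutiveness into the coordinates before touching the parity condition, after which the whole problem collapses to an even/odd partition of one multiset. Once the substitutions are in place, the parity identity $p_j\equiv r_j+1\pmod 2$ and the independence of the two sub-multisets do all the work, and the remaining binomial bookkeeping is routine. I would also check the boundary regime $a+b=n$, where the value range degenerates to $\{0\}$ and the conventions such as $\binom{-1}{0}=1$ already used in the excerpt are exactly what make the formula come out correctly.
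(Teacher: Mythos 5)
Your proof is correct, but note that the paper does not actually prove this statement: it is imported verbatim (up to renaming $n$, $q$, $r$ as $n-1$, $a$, $b$) from Musiker and Propp~\cite{MP06}, and the remark following the theorem only records that notational translation. So there is no in-paper argument to compare against; what you have written is a self-contained replacement for the citation. Your chain of substitutions is sound: $q_j=p_j-(j-1)$ converts non-consecutiveness into strict increase inside $\{1,\dots,2n-a-b\}$, the further shift $r_j=q_j-j$ produces a size-$(a+b)$ multiset from $\{0,1,\dots,2(n-a-b)\}$, and the identity $p_j=r_j+2j-1$ makes odd elements of $S$ correspond exactly to even values $r_j$, so the parity constraint decouples. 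Splitting a multiset by the parity of its values is indeed a free and invertible decomposition, and the stars-and-bars counts are right: $n-a-b+1$ even values give $\binom{n-b}{a}$ and $n-a-b$ odd values give $\binom{n-1-a}{b}$. Your boundary check is also consistent with how the theorem is used in the paper: the degenerate case $a+b=n$ forces $b=0$ and contributes the single subset $\{1,3,\dots,2n-1\}$, which is precisely the term split off as $\gamma_{f_k}^{2n}$ in the definition of $H_n$, with the convention $\binom{-1}{0}=1$ handling it in the formula. The trade-off is the usual one: the paper's citation keeps Section~2 short, while your argument makes the combinatorial input self-contained and entirely elementary, which would let a reader verify Lemma~\ref{lem:pms=oddeven} and the identity $H_n=P_{2n}$ without consulting~\cite{MP06}.
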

\begin{remark}
    This differs from the statement in~\cite{MP06} in the following ways: we require only the first of the two cases  (where Musiker and Propp's $N$ is odd), and we replace their $n$, $q$, and $r$ with $r-1$, $a$, and $b$, respectively.
\end{remark}

\section{Simplifying A-polynomial calculations}\label{sec:results} 

In this section we give precise statements of our results along with their proofs. First we consider the results that have analogues in the context of cluster algebras and later we see how this structure can be used to simplify the calculation of A-polynomials. 

\subsection{Results related to cluster algebras}  Recall that we use $n$ for the length of a \textit{tail} of a layered solid torus, which is the subset of tetrahedra corresponding to the maximal string of L's or R's preceding the tip of the word that describes its construction (see Definition~\ref{def:tail}). Also recall that the $k^{th}$ step is the step between triangles $T_k$ and $T_{k+1}$ in the Farey triangulation. Throughout this section, $k$ can be treated as fixed.

For ease of notation, define the following family of polynomials.

\begin{definition}\label{def:Hn}
    \[ H_n = \gamma_{f_k}^{2n} + \sum_{a+b\leq n-1} (-1)^{n-a-b}\binom{n-1-a}{b}\binom{n-b}{a}\gamma_{f_k}^{2a}\gamma_{o_k}^{2b}\gamma_{p_k}^{2(n-a-b)}, \text{ for } n\in\mathbb{Z}^+. \]
\end{definition}

\begin{theorem}\label{thm:laurent_precise}
    Suppose a layered solid torus has a tail of length $n\geq 1$ beginning at the $k^{th}$ step. Then, using the Ptolemy equations corresponding to each tetrahedron, the variable $\gamma_{h_{k+n-1}}$ can be expressed as 
    \[ \gamma_{h_{k+n-1}}=\frac{H_{n}}{\gamma_{f_k}^{n-1}\gamma_{o_k}^{n}}. \]
    Thus, $\gamma_{h_{k+n-1}}$ can be expressed as an integer Laurent polynomial in the variables $\gamma_{f_{k}}, \gamma_{o_{k}}$ and $\gamma_{p_{k}}$.
\end{theorem}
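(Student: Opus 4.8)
The plan is to prove the formula by induction on the tail length $n$, using the Ptolemy equations from Theorem~\ref{thm:HMP} to set up a recursion for the $\gamma$ variables along the tail, and then to identify the solution of that recursion with the explicit sum defining $H_n$ via the combinatorial count in Theorem~\ref{thm:combin}. First I would observe that within a single tail, every step has the same type (all L's or all R's), so the slopes that ``pivot'' and ``fan out'' behave uniformly. Writing out the Ptolemy equation $\gamma_{o_j}\gamma_{h_j}+\gamma_{p_j}^2-\gamma_{f_j}^2=0$ for the successive steps $j=k,k+1,\dots,k+n-1$ in the tail, and tracking how the labels $o_j,h_j,p_j,f_j$ shift as one moves along the tail, I expect the equations to telescope into a clean three-term recurrence in a single sequence of variables. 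The key bookkeeping step is to check that, for a tail of constant type, one of the slopes (say the pivot $p$ or the fan slope $f$) stays fixed while $o_j$ and $h_j$ march forward, so that $\gamma_{h_{j}}=\gamma_{o_{j+1}}$ and the recurrence reads something like $\gamma_{h_{j+1}} = (\gamma_{f}^{2}-\gamma_{p}^{2})/\gamma_{h_j} \cdot(\dots)$, which I would normalize into the exchange-relation form resembling $x_{m-1}x_{m+1}=x_m^2 + c$.

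\textbf{From the recurrence to perfect matchings.}
Once the recurrence is in hand, I would follow Musiker and Propp's combinatorial model: interpret the numerator polynomial $H_n$ as a generating function for perfect matchings of a suitably weighted $2\times n$ ladder graph, where the horizontal edges carry weights built from $\gamma_{f_k}$ and $\gamma_{p_k}$ and the vertical edges carry $\gamma_{o_k}$ (with appropriate signs). The discussion preceding Theorem~\ref{thm:combin} already reduces a perfect matching of the ladder graph to a choice of a nonconsecutive subset of $\{1,\dots,2n-1\}$, and Theorem~\ref{thm:combin} counts exactly those subsets with a prescribed number $a$ of odd and $b$ of even elements. Comparing this count, $\binom{n-1-a}{b}\binom{n-b}{a}$, with the coefficient appearing in the definition of $H_n$, I would match the exponents $2a,2b,2(n-a-b)$ of $\gamma_{f_k},\gamma_{o_k},\gamma_{p_k}$ to the three edge-types in a matching, and match the sign $(-1)^{n-a-b}$ to the parity of vertical edges used. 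This shows that $H_n$ is precisely the weighted matching sum, hence satisfies the same three-term recurrence as the $\gamma$'s once the weights are chosen to reflect the Ptolemy relation.

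\textbf{Closing the induction and reading off the denominator.}
With both $\gamma_{h_{k+n-1}}$ (scaled by the running denominator) and $H_n$ shown to satisfy the same recurrence and the same base cases $n=1$ (and, as needed, $n=2$), the two agree for all $n\geq 1$, giving the closed form $\gamma_{h_{k+n-1}}=H_n/(\gamma_{f_k}^{n-1}\gamma_{o_k}^{n})$. The final claim, that this is an \emph{integer Laurent polynomial} in $\gamma_{f_k},\gamma_{o_k},\gamma_{p_k}$, is then immediate: the coefficients $\binom{n-1-a}{b}\binom{n-b}{a}$ are integers, and every monomial in $H_n$ divided by $\gamma_{f_k}^{n-1}\gamma_{o_k}^{n}$ is a Laurent monomial in the three variables, so no spurious denominators survive.

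\textbf{Main obstacle.}
I expect the chief difficulty to be the first step: correctly tracking the slope labels $o_j,h_j,p_j,f_j$ through a constant-type tail and verifying that the Ptolemy equations really do collapse to a single clean exchange-type recurrence with the right constant term. The labelling conventions in Figure~\ref{fig:slopelabels} distinguish the initial step from a generic $k^{th}$ step and treat the initial step ``as though it were a right step,'' so I would need to handle the possibility that the tail's orientation (L versus R) permutes the roles of $p$ and $f$, and confirm that the resulting recurrence and hence the weighting of the ladder graph is independent of that orientation. A secondary, more routine obstacle is pinning down the exact edge weights on the ladder graph so that the sign pattern $(-1)^{n-a-b}$ emerges naturally from the matching expansion rather than being imposed by hand.
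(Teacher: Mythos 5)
Your proposal follows the same route as the paper's proof: induction on the tail length, with the Ptolemy equations of Theorem~\ref{thm:HMP} collapsing to an exchange-type recurrence, and with $H_n$ identified as the weighted matching polynomial of a ladder graph via Theorem~\ref{thm:combin}, exactly in the spirit of Musiker and Propp. However, as written there are three concrete points where the sketch would fail, only two of which you flagged yourself.

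First, the tail bookkeeping: the correct relations are that the pivot stays fixed, $\gamma_{p_{j+1}}=\gamma_{p_k}$, while $\gamma_{f_{j+1}}=\gamma_{h_j}$ and $\gamma_{o_{j+1}}=\gamma_{f_j}=\gamma_{h_{j-1}}$. Your key claim $\gamma_{h_j}=\gamma_{o_{j+1}}$ is off by one step: substituting it into the Ptolemy equation with $\gamma_f$ held fixed gives $\gamma_{h_j}\gamma_{h_{j+1}}=\gamma_f^2-\gamma_p^2$, a constant product, rather than the exchange relation $\gamma_{h_{j-1}}\gamma_{h_{j+1}}=\gamma_{h_j}^2-\gamma_{p_k}^2$ that the proof needs (note also that $f$ does \emph{not} stay fixed in the tail; only the pivot does). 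Second, the ladder weighting is backwards and the graph has the wrong size: $H_n$ equals the matching polynomial of the $2\times 2n$ ladder $G_{2n}$, whose vertical edges alternate $\pm\gamma_{p_k}$ and whose horizontal edges alternate $\gamma_{f_k},\gamma_{o_k}$; with your assignment ($\gamma_{o_k}$ on verticals, $\gamma_{f_k},\gamma_{p_k}$ on horizontals) even the case $n=1$ fails, since one obtains $\gamma_{f_k}^2\pm\gamma_{o_k}^2$ instead of $H_1=\gamma_{f_k}^2-\gamma_{p_k}^2$. Third --- and this is the genuine missing idea --- your step ``hence satisfies the same three-term recurrence as the $\gamma$'s'' conflates two different recurrences. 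Peeling columns off the ladder graph yields the \emph{linear} recurrence $P_{2n}=P_{2n-2}(\gamma_{f_k}^2+\gamma_{o_k}^2-\gamma_{p_k}^2)-\gamma_{f_k}^2\gamma_{o_k}^2P_{2n-4}$ (Lemma~\ref{lem:pna}), whereas the $\gamma$'s satisfy the \emph{quadratic} exchange identity; passing from the former to the latter, i.e.\ proving $H_m\cdot H_{m-2}=H_{m-1}^2-(\gamma_{f_k}^{m-2}\gamma_{o_k}^{m-1}\gamma_{p_k})^2$, requires a separate induction (Lemmas~\ref{lem:pnb} and~\ref{lem:Hrecurrence} in the paper) and does not follow automatically from the matching interpretation. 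Once these three points are repaired, your induction with base cases $n=1,2$ closes exactly as in the paper.
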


To prove this theorem we first establish a relationship between $H_n$ and the perfect matchings of a weighted ladder graph $G_r$. Let $G_r$ be the ladder graph $L_r$ with edges weighted as in Figure~\ref{fig:ladderwithweights}. Vertical edge weights alternate between $\gamma_{p_k}$ and $-\gamma_{p_k}$, starting with $\gamma_{p_k}$ on the left. Horizontal edge weights alternate between $\gamma_{f_k}$ and $\gamma_{o_k}$, starting with $\gamma_{f_k}$ on the left.

\begin{definition}
Let $\mathcal{S}$ be the set of all perfect matchings of the graph $G_r$. We define a polynomial $P_r$ in the variables $\gamma_{f_k}, \gamma_{o_k}$ and $\gamma_{p_k}$ to be the sum of the weights of all perfect matchings in $\mathcal{S}$. That is,
\[ P_{r}(\gamma_{f_k},\gamma_{o_k},\gamma_{p_k})= \sum_{S\in\mathcal{S}} w(S).\]
\end{definition}

\begin{figure}[ht]
    \centering
    \def\svgwidth{0.75\textwidth}
    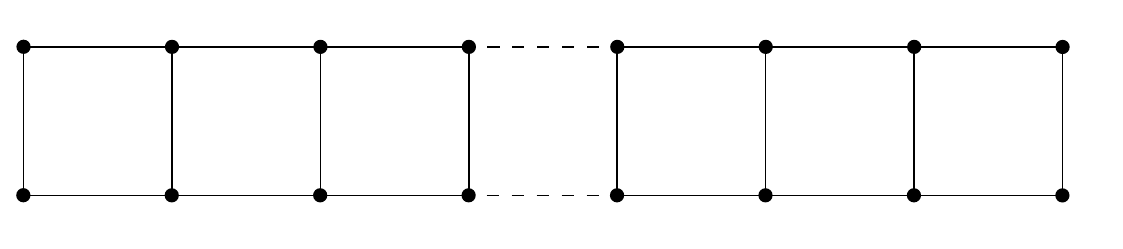
    \caption{The graph $G_r$ for even $r$, with edges weighted as described.}
    \label{fig:ladderwithweights}
\end{figure}

We show that $H_n$ is equivalent to $P_{2n}$.

\begin{lemma}[Musiker \& Propp, Lemma 2 of~\cite{MP06}]\label{lem:pms=oddeven}
    The number of ways to choose a perfect matching of $G_r$ with $\emph{a}$ pairs of edges weighted $\gamma_{f_k}$ and $\emph{b}$ pairs of edges weighted $\gamma_{o_k}$ is the number of ways to choose a subset $S\subset \{1,2,\dots, r-1\}$ such that $S$ contains $\emph{a}$ odd elements, $\emph{b}$ even elements, and no consecutive elements.
\end{lemma}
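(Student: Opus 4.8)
The plan is to refine the bijection between perfect matchings of $G_n$ and non-consecutive subsets of $\{1,\dots,n-1\}$ that was already described in Section~\ref{sec:Comb}, and then simply to read off how the two weight types on the horizontal edges distribute over the two parity classes. In other words, the whole statement reduces to checking that the established bijection is weight-respecting in the required refined sense.

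First I would recall the description of matchings from Section~\ref{sec:Comb}. Label the $n$ vertical edges (rungs) of $G_n$ by $1,\dots,n$ and the $n-1$ gaps between consecutive rungs by $1,\dots,n-1$, so that gap $i$ consists of the top and bottom horizontal edges joining rung $i$ to rung $i+1$. Since including a horizontal edge forces the parallel horizontal edge directly above or below it to be included, and since adjacent horizontal edges cannot be included simultaneously, a perfect matching is completely determined by the set of gaps carrying a horizontal pair, with the remaining rungs filled by vertical edges. As recorded in Section~\ref{sec:Comb}, this set is precisely a subset $S\subseteq\{1,\dots,n-1\}$ containing no two consecutive integers, giving the bijection $S\leftrightarrow$ matching.

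Next I would track the weights using the convention of Figure~\ref{fig:ladderwithweights}. The horizontal edge weights alternate $\gamma_{f_k},\gamma_{o_k},\gamma_{f_k},\dots$ from the left in both rows, so both edges forming gap $i$ carry weight $\gamma_{f_k}$ when $i$ is odd and $\gamma_{o_k}$ when $i$ is even. Hence including the horizontal pair at an odd gap contributes exactly one pair of edges weighted $\gamma_{f_k}$, and including it at an even gap contributes exactly one pair weighted $\gamma_{o_k}$. Under the bijection this means the number of $\gamma_{f_k}$-weighted pairs in the matching equals the number of odd elements of $S$, and the number of $\gamma_{o_k}$-weighted pairs equals the number of even elements of $S$.

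Combining these, a matching with $a$ pairs of $\gamma_{f_k}$ edges and $b$ pairs of $\gamma_{o_k}$ edges corresponds precisely to a non-consecutive subset $S$ with $a$ odd elements and $b$ even elements, which is exactly the claim. I do not expect any genuine difficulty here; the only care needed is the bookkeeping in the middle step, namely confirming that the parity of the gap index matches the alternation of the horizontal weights (with $\gamma_{f_k}$ sitting at gap $1$) and verifying that this identification is insensitive to the parity of $n$, even though Figure~\ref{fig:ladderwithweights} depicts only the even case.
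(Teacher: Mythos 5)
Your proposal is correct and follows essentially the same argument as the paper: both use the correspondence between perfect matchings and non-consecutive choices of horizontal edge pairs, with pairs weighted $\gamma_{f_k}$ matching odd indices and pairs weighted $\gamma_{o_k}$ matching even indices. Your version merely spells out the gap-labelling bookkeeping more explicitly than the paper does.
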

\begin{proof}
    To see this, note that all perfect matchings of $G_r$ can be found by choosing pairs of parallel horizontal edges with the condition that no consecutive edges are chosen. Pairs of parallel edges weighted $\gamma_{f_k}$ are in one-to-one correspondence with the odd integers between 1 and $r-1$, while pairs of parallel edges weighted $\gamma_{o_k}$ are in one-to-one correspondence with the even integers between 1 and $r-1$.
\end{proof}
\begin{remark}\label{rem:coefficient}
    Note that a perfect matching as described above must also include $\lceil r/2\rceil -a-b$ vertical edges each weighted $\gamma_{p_k}$ and $\lfloor r/2\rfloor -a-b$ vertical edges each weighted $-\gamma_{p_k}$. Hence, when $r=2n$, the number described in Lemma~\ref{lem:pms=oddeven} is the coefficient of the term $(-1)^{n-a-b}\gamma_{f_k}^{2a}\gamma_{o_k}^{2b}\gamma_{p_k}^{2(n-a-b)}$ in $P_{2n}$.
\end{remark}

Recall from Theorem~\ref{thm:combin} that the number of ways to choose a subset ${S\subset \{1,2,\dots, 2r-1\}}$ such that $S$ contains $a$ odd elements, $b$ even elements, and no consecutive elements is 
    \[ \binom{r-1-a}{b}\binom{r-b}{a}. \]

\begin{lemma}\label{lem:Hn=Pr}
For $H_n$ and $P_r$ as described above, we have $H_n=P_{2n}$.
\end{lemma}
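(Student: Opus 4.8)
The plan is to specialise the weighted ladder graph to $G_{2n}$ and to compute $P_{2n}$ directly as a sum over its perfect matchings, organised according to how many horizontal pairs of each weight they use. I would parametrise each perfect matching of $G_{2n}$ by two numbers: the number $a$ of pairs of horizontal edges weighted $\gamma_{f_k}$ and the number $b$ of pairs weighted $\gamma_{o_k}$. The goal is to show that collecting contributions by $(a,b)$ reproduces exactly the sum defining $H_n$, so the three stated combinatorial results do all the heavy lifting.

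First I would apply Remark~\ref{rem:coefficient} to the graph $G_{2n}$: a perfect matching using $a$ pairs of $\gamma_{f_k}$-edges and $b$ pairs of $\gamma_{o_k}$-edges necessarily uses $n-a-b$ vertical edges weighted $\gamma_{p_k}$ and $n-a-b$ weighted $-\gamma_{p_k}$, so its weight is $(-1)^{n-a-b}\gamma_{f_k}^{2a}\gamma_{o_k}^{2b}\gamma_{p_k}^{2(n-a-b)}$. By Lemma~\ref{lem:pms=oddeven} the number of such matchings equals the number of subsets of $\{1,\dots,2n-1\}$ containing $a$ odd elements, $b$ even elements and no consecutive elements, which by Theorem~\ref{thm:combin} is $\binom{n-1-a}{b}\binom{n-b}{a}$. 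Summing the weights of all matchings then gives
\[ P_{2n}=\sum_{a+b\leq n}(-1)^{n-a-b}\binom{n-1-a}{b}\binom{n-b}{a}\gamma_{f_k}^{2a}\gamma_{o_k}^{2b}\gamma_{p_k}^{2(n-a-b)}, \]
where the range $a+b\leq n$ reflects that the number $n-a-b$ of vertical edges of each sign must be non-negative.

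It then remains to reconcile this with the definition of $H_n$, whose sum runs only over $a+b\leq n-1$ and carries a separate leading term $\gamma_{f_k}^{2n}$. I would isolate the boundary layer $a+b=n$ (matchings with no vertical edges). Setting $b=n-a$, the coefficient becomes $\binom{n-1-a}{n-a}\binom{a}{a}$, which vanishes for every $a<n$ since the upper index is one less than the lower index, while for $a=n$, $b=0$ it equals $\binom{-1}{0}\binom{n}{n}=1$ under the standard convention $\binom{-1}{0}=1$; this sole surviving term is precisely $\gamma_{f_k}^{2n}$. Hence the $a+b=n$ layer contributes only the leading term, the remaining sum collapses to $a+b\leq n-1$, and a term-by-term comparison yields $P_{2n}=H_n$. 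The only genuine subtlety — and the one point I would argue explicitly — is this degenerate boundary case, where I must verify both that the spurious $a+b=n$ coefficients vanish and that the surviving term reproduces the explicit $\gamma_{f_k}^{2n}$ of $H_n$; everything else is bookkeeping assembled from the three cited results.
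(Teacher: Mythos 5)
Your proposal is correct and follows essentially the same route as the paper: decompose $P_{2n}$ over the pairs $(a,b)$, apply Remark~\ref{rem:coefficient}, Lemma~\ref{lem:pms=oddeven} and Theorem~\ref{thm:combin}, and match the result against the definition of $H_n$. The only difference is at the boundary layer $a+b=n$: the paper disposes of it combinatorially, noting that the no-consecutive-elements condition forces $a+b\leq n-1$ except for the unique all-odd subset $(a,b)=(n,0)$, whose matching has weight $\gamma_{f_k}^{2n}$, whereas you extend the counting formula to that layer and kill the spurious terms via vanishing binomial coefficients together with the convention $\binom{-1}{0}=1$ --- equally valid, though marginally less self-contained, since it applies Theorem~\ref{thm:combin} at a degenerate point the paper's argument never needs.
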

\begin{proof}
    Consider the graph $G_{2n}$. With notation as above, observe that $a$ can range between $0$ and $n$, since there are $n$ odd integers between 1 and $2n-1$. Similarly, $b$ can range between $0$ and $n-1$, since there are $n-1$ even integers between 1 and $2n-1$. Moreover, since we cannot choose consecutive integers, the sum of $a$ and $b$ is at most $n-1$, except in the case where $b=0$ and $a=n$. With this, along with the observation in Remark~\ref{rem:coefficient}, we have 
    \[ P_{2n}= \gamma_{f_k}^{2n} + \sum_{a+b\leq n-1} (-1)^{n-a-b} \binom{n-1-a}{b}\binom{n-b}{a} \gamma_{f_k}^{2a}\gamma_{o_k}^{2b}\gamma_{p_k}^{2(n-a-b)}=H_n. \]
\end{proof}

Lemma~\ref{lem:pna} and Lemma~\ref{lem:pnb} establish recursive properties of the polynomials $P_r$.
\begin{lemma}\label{lem:pna}
    The polynomials $P_r$ satisfy the recurrence
    \[P_{2r}=P_{2r-2}(\gamma_{f_k}^2 + \gamma_{o_k}^2 - \gamma_{p_k}^2) - \gamma_{f_k}^2 \gamma_{o_k}^2 P_{2r-4}, \text{ for all } r\geq 3. \]
\end{lemma}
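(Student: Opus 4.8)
The plan is to obtain the stated two-step recurrence from a single-step, three-term recurrence for $P_m$ that comes from conditioning on the rightmost column of $G_m$. In any perfect matching of $G_m$, column $m$ is covered either by its rung or by a horizontal domino (the equal-weight pair of top and bottom horizontal edges joining columns $m-1$ and $m$); in the first case the rest is a perfect matching of $G_{m-1}$, and in the second case columns $m-1$ and $m$ are both consumed, leaving a perfect matching of $G_{m-2}$. This partition of $\mathcal{S}$ gives, for $m\geq 2$,
\[ P_m=(-1)^{m-1}\gamma_{p_k}\,P_{m-1}+c_{m-1}\,P_{m-2}, \]
where $c_{m-1}=\gamma_{f_k}^2$ if $m-1$ is odd and $c_{m-1}=\gamma_{o_k}^2$ if $m-1$ is even. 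The point that makes this valid is that the weights depend only on absolute position counted from the left, so deleting the rightmost one or two columns leaves $G_{m-1}$ or $G_{m-2}$ with all remaining weights unchanged; conditioning on the right end (rather than the left) is what keeps the weight pattern intact.

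Next I would iterate this recurrence three times, tracking parities. Taking $m=2n$ (so $2n-1$ is odd) yields $P_{2n}=-\gamma_{p_k}P_{2n-1}+\gamma_{f_k}^2P_{2n-2}$. Substituting the $m=2n-1$ instance $P_{2n-1}=\gamma_{p_k}P_{2n-2}+\gamma_{o_k}^2P_{2n-3}$ gives
\[ P_{2n}=(\gamma_{f_k}^2-\gamma_{p_k}^2)\,P_{2n-2}-\gamma_{p_k}\gamma_{o_k}^2\,P_{2n-3}. \]
The obstruction is now the stray odd-indexed term $P_{2n-3}$. To remove it I would use the $m=2n-2$ instance $P_{2n-2}=-\gamma_{p_k}P_{2n-3}+\gamma_{f_k}^2P_{2n-4}$, which rearranges to $\gamma_{p_k}P_{2n-3}=\gamma_{f_k}^2P_{2n-4}-P_{2n-2}$. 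Multiplying by $\gamma_{o_k}^2$ and substituting collects the $P_{2n-2}$ contributions into the coefficient $\gamma_{f_k}^2+\gamma_{o_k}^2-\gamma_{p_k}^2$ and leaves exactly the single term $-\gamma_{f_k}^2\gamma_{o_k}^2P_{2n-4}$, which is the claimed recurrence.

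The main obstacle is purely bookkeeping: the one-step recurrence has coefficients that alternate both in sign (the rung weight $(-1)^{m-1}\gamma_{p_k}$) and in flavour ($\gamma_{f_k}$ versus $\gamma_{o_k}$), so I must check the parities of $2n,\,2n-1,\,2n-2,\,2n-3$ carefully to confirm that the odd-indexed term cancels and the even-indexed coefficients combine correctly. The hypothesis $n\geq 3$ ensures all indices in play lie in $\{2n-4,\dots,2n\}$ with $2n-4\geq 2$, so every application of the one-step recurrence is legitimate and no degenerate small-graph conventions are required. I would remark in passing that this is essentially a Cayley--Hamilton statement for the period-two transfer matrices of $G_m$ (the product of the two alternating $2\times 2$ transfer matrices satisfies a quadratic with these very coefficients), which offers an alternative derivation, but the direct substitution above is self-contained.
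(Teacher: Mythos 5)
Your proposal is correct and follows essentially the same route as the paper: both condition on the rightmost column of $G_m$ to obtain the one-step recurrence $P_m=(-1)^{m-1}\gamma_{p_k}P_{m-1}+c_{m-1}P_{m-2}$, write down the same three instances at $m=2n,\,2n-1,\,2n-2$, and eliminate the odd-indexed terms $P_{2n-1}$ and $P_{2n-3}$ by substitution. The only cosmetic difference is the order of elimination (the paper solves the first and third equations for the odd-indexed terms and divides by $\gamma_{p_k}$, whereas you substitute so as to avoid that division), which does not change the argument.
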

\begin{proof}
Assume $r\geq 3$. A perfect matching of $G_r$ can be considered as either: a perfect matching of $G_{r-1}$, plus the vertical edge at the far right of weight $-\gamma_{p_k}$ (if $r$ is even) or $\gamma_{p_k}$ (if $r$ is odd); or a perfect matching of $G_{r-2}$, plus the pair of horizontal edges on the far right, which are weighted either $\gamma_{f_k}$ (if $r$ is even) or $\gamma_{o_k}$ (if $r$ is odd).
This observation gives us the following: 
\begin{align*}
    P_{2r}&=-\gamma_{p_k} P_{2r-1} + \gamma_{f_k}^2 P_{2r-2}, \\
    P_{2r-1}&=\gamma_{p_k} P_{2r-2} + \gamma_{o_k}^2 P_{2r-3}, \\
    P_{2r-2}&=-\gamma_{p_k} P_{2r-3} + \gamma_{f_k}^2 P_{2r-4}.
\end{align*}

 We solve the first and third equations for $P_{2r-1}$ and $P_{2r-3}$, respectively, then substitute these into the second equation to get 
\begin{align*}
    \frac{\gamma_{f_k}^2 P_{2r-2}-P_{2r}}{\gamma_{p_k}}&=\gamma_{p_k} P_{2r-2} + \gamma_{o_k}^2  \frac{\gamma_{f_k}^2P_{2r-4}-P_{2r-2}}{\gamma_{p_k}} \nonumber \\
    \gamma_{f_k}^2 P_{2r-2} - P_{2r}&=\gamma_{p_k}^2 P_{2r-2} + \gamma_{o_k}^2 (\gamma_{f_k}^2 P_{2r-4} - P_{2r-2}) \nonumber \\
    P_{2r}&=\gamma_{f_k}^2 P_{2r-2} + \gamma_{o_k}^2 P_{2r-2} - \gamma_{p_k}^2 P_{2r-2} - \gamma_{f_k}^2 \gamma_{o_k}^2 P_{2r-4} \nonumber \\
    P_{2r}&=P_{2r-2}(\gamma_{f_k}^2 + \gamma_{o_k}^2 - \gamma_{p_k}^2) - \gamma_{f_k}^2 \gamma_{o_k}^2 P_{2r-4}. 
\end{align*}
\end{proof}

\begin{lemma}\label{lem:pnb}
The polynomials $P_r$ satisfy the recurrence
\[P_{2r-2}\cdot P_{2r-6}=P_{2r-4}^2-(\gamma_{f_k}^{r-3}\gamma_{o_k}^{r-2}\gamma_{p_{k}})^2, \text{ for } r\geq 4.\]
\end{lemma}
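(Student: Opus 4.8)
The plan is to prove the identity
\[P_{2n-2}\cdot P_{2n-6}=P_{2n-4}^2-(\gamma_{f_k}^{n-3}\gamma_{o_k}^{n-2}\gamma_{p_{k}})^2\]
by recognising it as a \emph{three-term} relation of the same shape as the one Lemma~\ref{lem:pna} produces, and then leveraging the linear recurrence already established. Writing $Q_m := P_{2m}$, Lemma~\ref{lem:pna} says the sequence $(Q_m)$ satisfies the constant-coefficient linear recurrence $Q_m = c\,Q_{m-1} - d\,Q_{m-2}$ with $c = \gamma_{f_k}^2+\gamma_{o_k}^2-\gamma_{p_k}^2$ and $d=\gamma_{f_k}^2\gamma_{o_k}^2$. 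The target identity, in these terms, is $Q_{n-1}Q_{n-3} - Q_{n-2}^2 = -(\gamma_{f_k}^{n-3}\gamma_{o_k}^{n-2}\gamma_{p_k})^2 = -d^{\,n-3}\gamma_{o_k}^2\gamma_{p_k}^2$, so the whole statement is an assertion about the ``Casoratian'' (discrete analogue of a Wronskian) of a second-order linear recurrence.

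First I would establish the key structural fact: for any sequence obeying $Q_m = c\,Q_{m-1} - d\,Q_{m-2}$, the quantity $D_m := Q_m Q_{m-2} - Q_{m-1}^2$ satisfies $D_m = d\,D_{m-1}$. This is a one-line computation: substitute $Q_m = cQ_{m-1}-dQ_{m-2}$ into $Q_mQ_{m-2}-Q_{m-1}^2$ and the $c$-terms cancel, leaving $D_m = -d(Q_{m-1}Q_{m-3}-Q_{m-2}^2)$ after reindexing — so $D_m = d\,D_{m-1}$. Iterating gives $D_m = d^{\,m-m_0} D_{m_0}$ for any base index $m_0$, reducing the problem to computing a single base case of the Casoratian and then multiplying by the appropriate power of $d=\gamma_{f_k}^2\gamma_{o_k}^2$. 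Since the target's sign and the factor $\gamma_{o_k}^2\gamma_{p_k}^2$ must emerge, I would pick the smallest index where $P_{2m-4}$ is legitimately defined and compute $P_2, P_4$ (and, if needed, $P_6$) directly from the definition of $P_m$ as a sum of perfect-matching weights of $G_m$, checking these against the recurrence of Lemma~\ref{lem:pna} to fix the base values.

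The main obstacle is twofold and both parts are about bookkeeping at the boundary rather than any deep idea. The recurrence in Lemma~\ref{lem:pna} is only asserted for $n\ge 3$, i.e. it relates $Q_n, Q_{n-1}, Q_{n-2}$ only down to $Q_1=P_2$; to run the Casoratian argument cleanly I must know $P_2$ and $P_4$ explicitly (and confirm the recurrence's validity or a suitable modification at the very first step), which means computing small weighted ladder-graph matchings by hand and verifying the extrapolation is consistent. The second subtlety is the power of $d$: I must match the exponent so that $d^{\,n-3}\gamma_{o_k}^2\gamma_{p_k}^2$ comes out exactly, which pins down precisely which base index to anchor at. Concretely, I expect the base case to be $Q_2 Q_0 - Q_1^2 = P_4 P_0 - P_2^2 = -\gamma_{o_k}^2\gamma_{p_k}^2$ (with $P_0:=1$ the empty matching), whence $D_m = d^{\,m-2}(-\gamma_{o_k}^2\gamma_{p_k}^2)$, and setting $m=n-1$ yields exactly $-\gamma_{f_k}^{2(n-3)}\gamma_{o_k}^{2(n-3)}\gamma_{o_k}^2\gamma_{p_k}^2 = -(\gamma_{f_k}^{n-3}\gamma_{o_k}^{n-2}\gamma_{p_k})^2$, as required. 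The only real care needed is confirming that the definitions of $P_0$ and $P_2$ make the recurrence hold down to this base, so that the clean telescoping $D_m=d\,D_{m-1}$ is valid throughout the range $m\ge 2$ covering all $n\ge 4$.
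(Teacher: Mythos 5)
Your proposal is correct, but it is organized differently from the paper's proof, and the difference is worth recording. The paper proves the lemma by induction on the identity itself: the base case $n=4$ is verified by explicitly computing $P_6$ (enumerating all perfect matchings of $G_6$ in a figure) and expanding $P_6\cdot P_2 = P_4^2-\gamma_{f_k}^2\gamma_{o_k}^4\gamma_{p_k}^2$ by hand, and the inductive step applies Lemma~\ref{lem:pna} twice, once to expand $P_{2m}$ and once to recombine $P_{2m-4}(\gamma_{f_k}^2+\gamma_{o_k}^2-\gamma_{p_k}^2)-\gamma_{f_k}^2\gamma_{o_k}^2 P_{2m-6}$ back into $P_{2m-2}$. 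Your Casoratian step $D_m=d\,D_{m-1}$ is exactly this same two-fold use of the recurrence, but abstracted into a general fact about second-order linear recurrences, and you anchor the telescoping one step lower, at $D_2=P_4\cdot P_0-P_2^2=-\gamma_{o_k}^2\gamma_{p_k}^2$ with the convention $P_0:=1$. What your route buys is that you never need $P_6$: the only data required are $P_2$ and $P_4$, at the cost of checking that the recurrence of Lemma~\ref{lem:pna} extends to $n=2$, i.e.\ $P_4=(\gamma_{f_k}^2+\gamma_{o_k}^2-\gamma_{p_k}^2)P_2-\gamma_{f_k}^2\gamma_{o_k}^2$, which indeed holds and is consistent with $P_0=1$ counting the empty matching of the empty graph. (The paper stays strictly within the stated range $n\geq 3$ of Lemma~\ref{lem:pna} and pays for it with the $G_6$ enumeration.) Two small points of care in your write-up: first, the intermediate expression ``$D_m=-d(Q_{m-1}Q_{m-3}-Q_{m-2}^2)$'' has a sign slip --- the $c$-terms cancel to give $D_m=+d\,(Q_{m-1}Q_{m-3}-Q_{m-2}^2)=d\,D_{m-1}$, which is the conclusion you state and the one you need, since with a factor $-d$ the sign of $D_m$ would alternate and the final identity would fail; second, the step $D_3=d\,D_2$ is precisely where the extended recurrence $Q_2=cQ_1-dQ_0$ is invoked, so that verification is genuinely necessary rather than optional bookkeeping.
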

\begin{proof}
Note that 
\begin{equation*}
    P_2 = \gamma_{f_k}^{2}-\gamma_{p_k}^2 \qquad \text{ and } \qquad P_4 = \gamma_{f_k}^{4} + \gamma_{p_k}^{4} -2\gamma_{f_k}^{2}\gamma_{p_k}^{2} -\gamma_{o_k}^{2}\gamma_{p_k}^{2}.
\end{equation*}
All perfect matchings of $G_6$ are shown in Figure~\ref{fig:G6pms}. From this, we have that
   \[ P_6 = \gamma_{f_k}^{6} - \gamma_{o_k}^4\gamma_{p_k}^{2} - 2\gamma_{f_k}^{2}\gamma_{o_k}^{2}\gamma_{p_k}^{2} - 3\gamma_{f_k}^{4}\gamma_{p_k}^{2} + 2\gamma_{o_k}^{2}\gamma_{p_k}^{4} + 3\gamma_{f_k}^{2}\gamma_{p_k}^{4} - \gamma_{p_k}^{6}.\]

\begin{figure}
    \centering
    \includegraphics[width=0.8\textwidth]{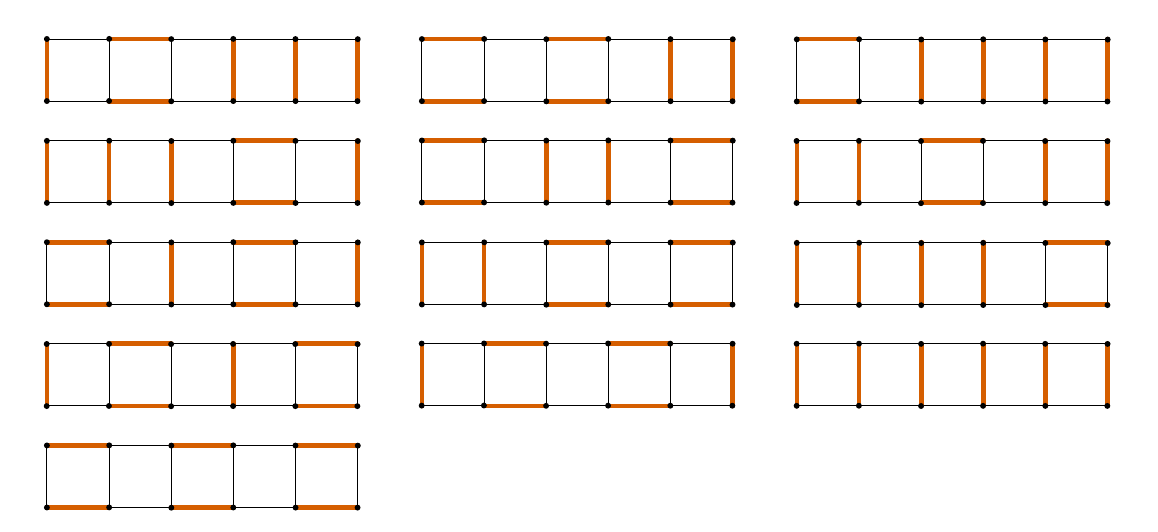}
    \caption{All perfect matchings of the graph $G_6$ (with weights omitted for clarity).}
    \label{fig:G6pms}
\end{figure}

So, when $r=4$, we have 
\begin{align*}
    P_6\cdot P_2&= (\gamma_{f_k}^{6} - \gamma_{o_k}^4\gamma_{p_k}^{2} - 2\gamma_{f_k}^{2}\gamma_{o_k}^{2}\gamma_{p_k}^{2} - 3\gamma_{f_k}^{4}\gamma_{p_k}^{2} + 2\gamma_{o_k}^{2}\gamma_{p_k}^{4} + 3\gamma_{f_k}^{2}\gamma_{p_k}^{4} - \gamma_{p_k}^{6}) \cdot (\gamma_{f_k}^{2}-\gamma_{p_k}^2) \\
    &= \gamma_{f_k}^{8} - \gamma_{f_k}^{2}\gamma_{o_k}^{4}\gamma_{p_k}^{2} - 2\gamma_{f_k}^{4}\gamma_{o_k}^{2}\gamma_{p_k}^{2} - 4\gamma_{f_k}^{6}\gamma_{p_k}^{2} + 4\gamma_{f_k}^{2}\gamma_{o_k}^{2}\gamma_{p_k}^{4} + 6\gamma_{f_k}^{4}\gamma_{p_k}^{4} - 4\gamma_{f_k}^{2}\gamma_{p_k}^{6} \\
    & \quad + \gamma_{o_k}^4\gamma_{p_k}^{4} - 2\gamma_{o_k}^{2}\gamma_{p_k}^{6} + \gamma_{p_k}^{8} \\
    &= (\gamma_{f_k}^{4} + \gamma_{p_k}^{4} -2\gamma_{f_k}^{2}\gamma_{p_k}^{2} -\gamma_{o_k}^{2}\gamma_{p_k}^{2})^2 - \gamma_{f_k}^{2}\gamma_{o_k}^{4}\gamma_{p_k}^{2} \\
    &= P_4^2 - \gamma_{f_k}^{2}\gamma_{o_k}^{4}\gamma_{p_k}^{2}.
\end{align*}
This establishes the base case. 

Now consider $r>4$ and assume for induction that 
\[ P_{2r-2}\cdot P_{2r-6}=P_{2r-4}^2-(\gamma_{f_k}^{r-3}\gamma_{o_k}^{r-2}\gamma_{p_{k}})^2.\]
Considering $P_{2(r+1)-2}\cdot P_{2(r+1)-6}$ we have 
\begin{align*}
    P_{2r}\cdot P_{2r-4} &= \left( P_{2r-2}(\gamma_{f_k}^2 + \gamma_{o_k}^2 - \gamma_{p_k}^2) - \gamma_{f_k}^2 \gamma_{o_k}^2 P_{2r-4} \right) \cdot P_{2r-4}, & \text{ from Lemma~\ref{lem:pna}} \\
    &= P_{2r-2}\cdot P_{2r-4}(\gamma_{f_k}^2 + \gamma_{o_k}^2 - \gamma_{p_k}^2) - \gamma_{f_k}^2 \gamma_{o_k}^2 P_{2r-4}^2  &  \\
    &= P_{2r-2}\cdot P_{2r-4}(\gamma_{f_k}^2 + \gamma_{o_k}^2 - \gamma_{p_k}^2) \\
    &\qquad - \gamma_{f_k}^2 \gamma_{o_k}^2 \left(P_{2r-2}\cdot P_{2r-6}+(\gamma_{f_k}^{r-3}\gamma_{o_k}^{r-2}\gamma_{p_{k}})^2\right) & \text{by assumption} \\
    &= P_{2r-2}\cdot \left( P_{2r-4}(\gamma_{f_k}^2 + \gamma_{o_k}^2 - \gamma_{p_k}^2) - \gamma_{f_k}^2 \gamma_{o_k}^2 P_{2r-6}\right) \\
    &\qquad - \gamma_{f_k}^2 \gamma_{o_k}^2(\gamma_{f_k}^{r-3}\gamma_{o_k}^{r-2}\gamma_{p_{k}})^2 & \\
    &= P_{2r-2}\cdot P_{2r-2} - (\gamma_{f_k}^{r-2}\gamma_{o_k}^{r-1}\gamma_{p_{k}})^2 & \text{ from Lemma~\ref{lem:pna}.}
\end{align*}
Hence, by induction, 
\[P_{2r-2}\cdot P_{2r-6}=P_{2r-4}^2-(\gamma_{f_k}^{r-3}\gamma_{o_k}^{r-2}\gamma_{p_{k}})^2, \text{ for } r\geq 4.\]
\end{proof}

\begin{lemma}\label{lem:Hrecurrence}
    The polynomials $H_n$ satisfy the following recurrence, for any $n\geq 3$.
    \[ H_{n}\cdot H_{n-2} = H_{n-1}^2-(\gamma_{f_k}^{n-2}\gamma_{o_k}^{n-1}\gamma_{p_{k}})^2. \]
\end{lemma}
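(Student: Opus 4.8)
The plan is to obtain this recurrence for the $H_n$ as an immediate consequence of the analogous recurrence already established for the $P_n$ in Lemma~\ref{lem:pnb}, using the identity $H_n = P_{2n}$ proved just above. Since all the combinatorial and algebraic content has been absorbed into those two prior results, the remaining task is purely one of re-indexing, and I expect no genuine obstacle here.

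Concretely, I would first recall that $H_j = P_{2j}$ for every $j \in \mathbb{Z}^+$. Applying this with $j = n-1$, $j = n-2$ and $j = n-3$ gives
\[ H_{n-1} = P_{2n-2}, \qquad H_{n-2} = P_{2n-4}, \qquad H_{n-3} = P_{2n-6}. \]
Substituting these into the desired left-hand side yields $H_{n-1}\cdot H_{n-3} = P_{2n-2}\cdot P_{2n-6}$, and into the desired right-hand side yields $H_{n-2}^2 - (\gamma_{f_k}^{n-3}\gamma_{o_k}^{n-2}\gamma_{p_{k}})^2 = P_{2n-4}^2 - (\gamma_{f_k}^{n-3}\gamma_{o_k}^{n-2}\gamma_{p_{k}})^2$. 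The claimed identity for the $H_n$ is therefore literally the statement of Lemma~\ref{lem:pnb}.

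The only point warranting a brief check is that the ranges of validity agree: Lemma~\ref{lem:pnb} holds for $n \geq 4$, and this is exactly the hypothesis imposed on the present statement, so no additional base cases or edge cases arise and the substitution is valid throughout. I would close by simply invoking Lemma~\ref{lem:pnb}, noting that the error term $(\gamma_{f_k}^{n-3}\gamma_{o_k}^{n-2}\gamma_{p_{k}})^2$ is carried over unchanged since it does not depend on the choice between the $P$ and $H$ indexing. Thus the genuinely substantive steps were the perfect-matching recurrence of Lemma~\ref{lem:pnb} and the translation $H_n = P_{2n}$, and this lemma is a direct corollary of combining them.
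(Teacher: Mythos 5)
Your proposal is correct and is essentially identical to the paper's own proof, which likewise observes that $H_n = P_{2n}$ turns the claimed identity into a restatement of Lemma~\ref{lem:pnb} with matching range $n \geq 4$. Your version merely spells out the index substitutions that the paper leaves implicit.
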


\begin{proof}
 We have $H_n=P_{2n}$ by Lemma~\ref{lem:Hn=Pr}, so the result follows by setting $r=n+1$ in Lemma~\ref{lem:pnb}.
\end{proof}

We are now in a position to prove Theorem~\ref{thm:laurent_precise}.  Recall that the equations involved in this proof are those associated with the tail of the layered solid torus, and we ignore equations related to the head, body and tip of the layered solid torus (recall Definition~\ref{def:tail}).

\begin{proof}[Proof of Theorem~\ref{thm:laurent_precise}]
    We proceed by induction on $n$, the length of the tail. 
    
    When $n=1$, the tail of the layered solid torus consists of one tetrahedron. The corresponding Ptolemy equation (from Theorem~\ref{thm:HMP}) is  the one for the $k^{th}$ step: $\gamma_{o_k}\gamma_{h_k}+\gamma_{p_k}^2-\gamma_{f_k}^2=0$, which we rewrite as
    \begin{equation}\label{eq:hk}
        \gamma_{h_{k}}=\frac{\gamma_{f_{k}}^2-\gamma_{p_{k}}^2}{\gamma_{o_{k}}}.
    \end{equation}
     Recalling Definition~\ref{def:Hn}, we have 
    \[ H_1= \gamma_{f_k}^{2} + \sum_{a=b=0} (-1)^{1-a-b}\binom{0-a}{b}\binom{1-b}{a}\gamma_{f_k}^{2a}\gamma_{o_k}^{2b}\gamma_{p_k}^{2(1-a-b)}= \gamma_{f_k}^{2}-\gamma_{p_k}^2, \]
    so we have 
    \[\gamma_{h_{k}}=\frac{H_1}{\gamma_{o_{k}}}.\]
    
    When $n=2$, the tail of the layered solid torus consists of two tetrahedra and the corresponding Ptolemy equations are  those corresponding to the $k^{th}$ and $(k+1)^{st}$ steps, namely ${\gamma_{o_k}\gamma_{h_k}+\gamma_{p_k}^2-\gamma_{f_k}^2=0}$ (as above) and ${\gamma_{o_{k+1}}\gamma_{h_{k+1}}+\gamma_{p_{k+1}}^2-\gamma_{f_{k+1}}^2=0}$. Rearranging the second equation gives
    \[ \gamma_{h_{k+1}}=\frac{\gamma_{f_{k+1}}^2-\gamma_{p_{k+1}}^2}{\gamma_{o_{k+1}}}. \]
     However, in a tail we know that certain slopes are equal, as seen in Figure~\ref{fig:eq_slopes}.
    
    \begin{figure}
        \centering
        \includegraphics[width=0.75\textwidth]{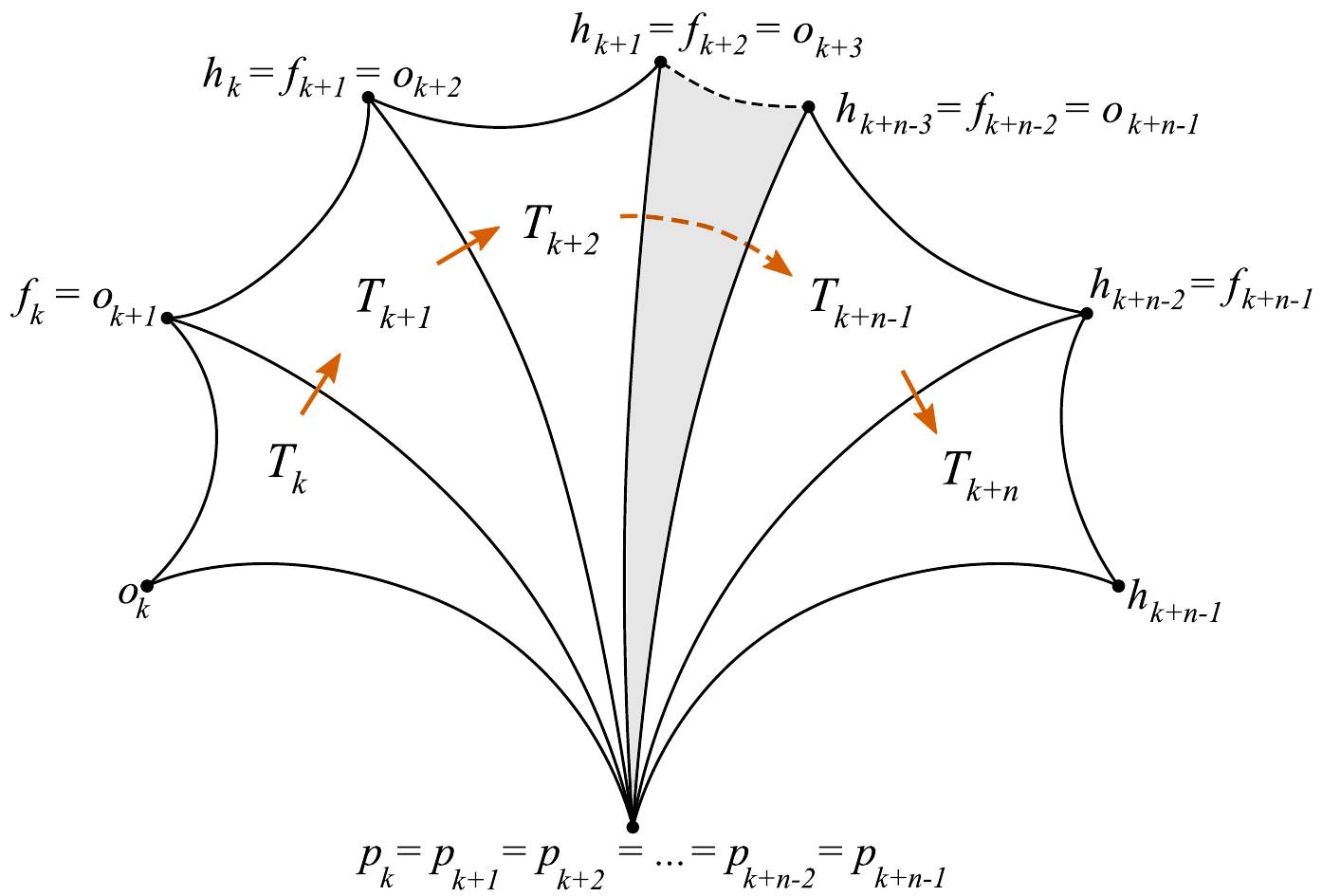}
        \caption{A tail of length $n$ beginning at step $k$. The tail starts in triangle $T_k$ and ends in triangle $T_{k+n}$. Each vertex is labelled by multiple slopes, since they are relevant to multiple steps in the tail (except for the vertices corresponding to $o_k$ and $h_{k+n-1}$). In particular, all pivot slopes are the same, the heading slope for one step is the fan slope for the next, and the fan slope for one step is the old slope for the next.}
        \label{fig:eq_slopes}
    \end{figure}
    
    In particular, we have
    \begin{equation*}
        \gamma_{p_{k+1}}=\gamma_{p_k}, \qquad \gamma_{f_{k+1}}=\gamma_{h_{k}}, \qquad \text{and}\qquad \gamma_{o_{k+1}}=\gamma_{f_{k}}.
    \end{equation*}
    Hence, making these substitutions and using equation \eqref{eq:hk}, we have 
    \begin{equation*}
        \gamma_{h_{k+1}}=\frac{\gamma_{h_{k}}^2-\gamma_{p_{k}}^2}{\gamma_{f_{k}}}=\frac{\Big(\frac{\gamma_{f_{k}}^2-\gamma_{p_{k}}^2}{\gamma_{o_{k}}}\Big)^2-\gamma_{p_{k}}^2}{\gamma_{f_{k}}}= \frac{\gamma_{f_k}^{4} + \gamma_{p_k}^{4} -2\gamma_{f_k}^{2}\gamma_{p_k}^{2} -\gamma_{o_k}^{2}\gamma_{p_k}^{2}}{\gamma_{f_{k}}\gamma_{o_k}^2}. 
    \end{equation*}

    Meanwhile, 
    \begin{align*}
        H_2 &= \gamma_{f_k}^{4} + \sum_{a+b\leq 1} (-1)^{2-a-b}\binom{1-a}{b}\binom{2-b}{a}\gamma_{f_k}^{2a}\gamma_{o_k}^{2b}\gamma_{p_k}^{2(2-a-b)} \\
        &= \gamma_{f_k}^{4} + \gamma_{p_k}^{4} -2\gamma_{f_k}^{2}\gamma_{p_k}^{2} -\gamma_{o_k}^{2}\gamma_{p_k}^{2}.
    \end{align*}
    Thus, 
    \[ \gamma_{h_{k+1}} = \frac{H_2}{\gamma_{f_{k}}\gamma_{o_k}^2}. \]
    
    Now, suppose $n>2$ and assume for induction that  
    \[ \gamma_{h_{k+i-1}}=\frac{H_{i}}{\gamma_{f_k}^{i-1}\gamma_{o_k}^{i}}, \text{ for all } i<n. \]
    In a tail of length $n$ there are $n$ tetrahedra. The Ptolemy equation corresponding to the $n^{th}$ tetrahedron is  the one from Theorem~\ref{thm:HMP} associated to the $(k+n-1)^{st}$ step, which can be written as 
    \begin{equation}
        \gamma_{h_{k+n-1}}=\frac{\gamma_{f_{k+n-1}}^2-\gamma_{p_{k+n-1}}^2}{\gamma_{o_{k+n-1}}} \label{hkm-2A}.
    \end{equation}
     Again, with reference to Figure~\ref{fig:eq_slopes}, observe that the following variables are equivalent in the tail:
    \[\gamma_{p_{k+n-1}}=\gamma_{p_{k}}, \quad  \gamma_{f_{k+n-1}}=\gamma_{h_{k+n-2}}, \quad \text{and} \quad \gamma_{o_{k+n-1}}=\gamma_{h_{k+n-3}},\] 
    so \eqref{hkm-2A} becomes 
    \[ \gamma_{h_{k+n-1}}=\frac{\gamma_{h_{k+n-2}}^2-\gamma_{p_{k}}^2}{\gamma_{h_{k+n-3}}}. \]
    
    Now, using the inductive assumption we write
    \begin{align*}
    \gamma_{h_{k+n-1}}&=\Bigg[\Bigg(\frac{{H_{n-1}}}{\gamma_{f_k}^{n-2}\gamma_{o_k}^{n-1}} \Bigg)^2-\gamma_{p_{k}}^2 \Bigg]{ \Bigg/ \Bigg(  \frac{H_{n-2}}{\gamma_{f_k}^{n-3}\gamma_{o_k}^{n-2}}} \Bigg)\\
    &=\frac{H_{n-1}^2-(\gamma_{f_k}^{n-2}\gamma_{o_k}^{n-1}\gamma_{p_{k}})^2}{\gamma_{f_k}^{n-1}\gamma_{o_k}^{n} H_{n-2}}.   
    \end{align*}
    
    Hence, to prove the result, we need 
    \[ \frac{H_{n-1}^2-(\gamma_{f_k}^{n-2}\gamma_{o_k}^{n-1}\gamma_{p_{k}})^2}{\gamma_{f_k}^{n-1}\gamma_{o_k}^{n} H_{n-2}} = \frac{H_{n}}{\gamma_{f_k}^{n-1}\gamma_{o_k}^{n}}. \]
    But this is equivalent to showing that 
    \[H_{n}\cdot H_{n-2} = H_{n-1}^2-(\gamma_{f_k}^{n-2}\gamma_{o_k}^{n-1}\gamma_{p_{k}})^2,\]
    for $n>2$, which is the recurrence in Lemma~\ref{lem:Hrecurrence}.
    Hence, the claim follows by induction. 
\end{proof}

\begin{theorem}\label{thm:intnos_precise}
    Suppose the tail of a layered solid torus has length $n\geq 1$ and begins at step $k$. Let $\alpha_s$ be the geodesic in $\mathbb{H}^2$ whose endpoints are the vertices corresponding to the slopes $h_{k+n-1}$ (the heading slope at the end of the tail) and $s$, where $s$ is one of $f_k, o_k$ or $p_k$ (the fan, old and pivot slopes at the beginning of the tail). The exponent of $\gamma_{s}$ in the denominator of the Laurent polynomial for $\gamma_{h_{k+n-1}}$ is given by the intersection number of $\alpha_s$ with edges it intersects in the Farey triangulation. 
\end{theorem}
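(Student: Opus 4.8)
The plan is to read off the three exponents directly from the explicit Laurent expression in Theorem~\ref{thm:laurent_precise}, and then match them against a direct count of crossings in the Farey triangulation. Theorem~\ref{thm:laurent_precise} gives $\gamma_{h_{k+n-1}} = H_n/(\gamma_{f_k}^{n-1}\gamma_{o_k}^{n})$, so the candidate exponents are $n-1$ for $s=f_k$, $n$ for $s=o_k$, and $0$ for $s=p_k$. First I would confirm that this fraction is already in lowest terms by exhibiting, in the definition of $H_n$, monomials coprime to each denominator variable: the leading monomial $\gamma_{f_k}^{2n}$ has no factor of $\gamma_{o_k}$ or $\gamma_{p_k}$, and the $a=b=0$ term $(-1)^n\gamma_{p_k}^{2n}$ has no factor of $\gamma_{f_k}$ or $\gamma_{o_k}$. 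Hence $H_n$ is divisible by none of $\gamma_{f_k},\gamma_{o_k},\gamma_{p_k}$, so the exponents of these variables in the reduced denominator are exactly $n-1$, $n$, and $0$.

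Next I would reduce the intersection-number computation to a single normalised picture. Since a tail is a maximal run of all L's or all R's, reflecting the Farey triangulation interchanges the two cases, so I may assume the tail consists of right steps. Throughout a tail the pivot slope is constant, $p_{k+i}=p_k$, so the whole tail fans out around one ideal vertex. Applying the element of the symmetry group $\mathrm{PGL}(2,\mathbb{Z})$ of the Farey triangulation that carries $T_k$ to the standard triangle with $p_k\mapsto\infty$, $f_k\mapsto 0$, $o_k\mapsto -1$ — which preserves both the triangulation and geometric intersection numbers — I may assume these normalised slopes. The mediant rule for Farey neighbours then unfolds the fan explicitly: using the tail relations $f_{k+i}=h_{k+i-1}$, $o_{k+i}=f_{k+i-1}$, $p_{k+i}=p_k$, each new vertex $h_{k+i}$ is the mediant of $\infty=1/0$ and $h_{k+i-1}=i/1$, giving $h_{k+i}=(i+1)/1$ and hence $h_{k+n-1}=n$.

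Finally I would count crossings in this fan. The ideal polygon with vertices $0,1,\dots,n,\infty$ is triangulated by the Farey edges $\{j,\infty\}$ and $\{j,j+1\}$ only (no chord $\{j,j+2\}$ is a Farey edge), so it is exactly the fan of triangles $\{j,j+1,\infty\}$. Being geodesically convex, it contains the geodesic $\alpha_{f_k}=\{0,n\}$ between two of its vertices, which therefore crosses precisely the interior edges $\{1,\infty\},\dots,\{n-1,\infty\}$, that is, $n-1$ of them; the analogous polygon on $-1,0,\dots,n,\infty$ shows $\alpha_{o_k}=\{-1,n\}$ crosses $\{0,\infty\},\dots,\{n-1,\infty\}$, that is, $n$ edges; and $\alpha_{p_k}=\{\infty,n\}$ is itself a Farey edge and crosses none. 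These counts $n-1$, $n$, $0$ agree with the exponents found above, proving the theorem. The main obstacle is the geometric step: justifying that the tail really is the standard fan and that the relevant geodesics meet only the fan's spoke edges $\{j,\infty\}$ and none of the finer Farey edges lying below. This is where the convexity of ideal polygons and the absence of long Farey chords do the essential work.
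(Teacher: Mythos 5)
Your proposal is correct, and it follows the same overall strategy as the paper's proof: read the candidate exponents $n-1$, $n$, $0$ off the explicit expression in Theorem~\ref{thm:laurent_precise}, normalise the picture by a symmetry of the Farey triangulation so that the tail becomes a standard fan about the pivot, and count crossings there. The differences are in execution, and both work in your favour. The paper normalises with $f_k=1/0$, $o_k=-1/1$, $p_k=0/1$, so the fan vertices are $h_{k+i}=1/(i+1)$, and then argues by induction on the tail length, reading the base case and the inductive step (each extra tetrahedron adds one crossing for $\alpha_{f_k}$ and one for $\alpha_{o_k}$, never any for $\alpha_{p_k}$) from figures. You instead place the pivot at $\infty$, so the fan becomes $0,1,\dots,n,\infty$, and you replace the induction by a single direct count: the ideal polygon on $0,1,\dots,n,\infty$ (resp.\ $-1,0,\dots,n,\infty$) is convex and so contains the relevant geodesic, its only interior Farey edges are the spokes $\{j,\infty\}$ because Farey edges cannot cross the polygon's boundary edges, and the geodesic crosses each spoke exactly once. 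This is a self-contained justification of precisely what the paper's figures display, so it trades pictorial evidence for an explicit combinatorial-geometric argument. Second, you verify that $H_n/(\gamma_{f_k}^{n-1}\gamma_{o_k}^{n})$ is in lowest terms by exhibiting the monomials $\gamma_{f_k}^{2n}$ and $(-1)^n\gamma_{p_k}^{2n}$ in $H_n$ (no cancellation is possible since distinct $(a,b)$ give distinct monomials); the paper takes this for granted, yet it is exactly what makes ``the exponent of $\gamma_s$ in the denominator'' well defined, so this small addition closes a minor but genuine gap in the published argument.
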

\begin{proof}
    Denote the set of edges in the Farey triangulation by $\mathcal{F}$ and let $|\alpha_s\cap\mathcal{F}|$ be the number of transverse intersections between the geodesic $\alpha_s$ and all edges in $\mathcal{F}$. In each of the accompanying figures, $\alpha_{f_k}$ is shown in dark blue, $\alpha_{o_k}$ is shown in green, and $\alpha_{p_k}$ is shown in light blue. 
    
    For this proof we consider the Farey triangulation of the upper half-space model of $\mathbb{H}^2$. After applying the appropriate (not necessarily orientation-preserving) isometry of $\mathbb{H}^2$, we may assume that $f_k=1/0$, $o_k=-1/1$, $p_k=0/1$ and $h_k=1/1$. Note that this choice of slopes ensures that $h_{k+n-1}=1/n$ for all $n\geq 1$. In other words, when considering a tail of length $n$, the common endpoint of $\alpha_{f_k},\alpha_{o_k}$ and $\alpha_{p_k}$ is $1/n$. 

    We prove the claim by induction on the length of the tail. 
    When $n=1$ we have the situation shown in Figure~\ref{fig:int_basecase}. In particular, we see that $\alpha_{f_k}$ and $\alpha_{p_k}$ are each parallel to edges in the Farey triangulation and therefore $|\alpha_{f_k}\cap\mathcal{F}|=|\alpha_{p_k}\cap\mathcal{F}|=0$. Meanwhile, $\alpha_{o_k}$ intersects one edge in the Farey triangulation so $|\alpha_{o_k}\cap\mathcal{F}|=1$. From Theorem~\ref{thm:laurent_precise}, we know that the denominator of the Laurent polynomial for $\gamma_{h_{k}}$ is 
    \[\gamma_{f_k}^{0}\gamma_{o_k}^{1}\gamma_{p_k}^{0}=\gamma_{f_k}^{|\alpha_{f_k}\cap\mathcal{F}|}\gamma_{o_k}^{|\alpha_{o_k}\cap\mathcal{F}|}\gamma_{p_k}^{|\alpha_{p_k}\cap\mathcal{F}|}. \]
    Hence, the base case holds.
    
    \begin{figure}[ht]
        \centering
        \includegraphics[width=0.45\textwidth]{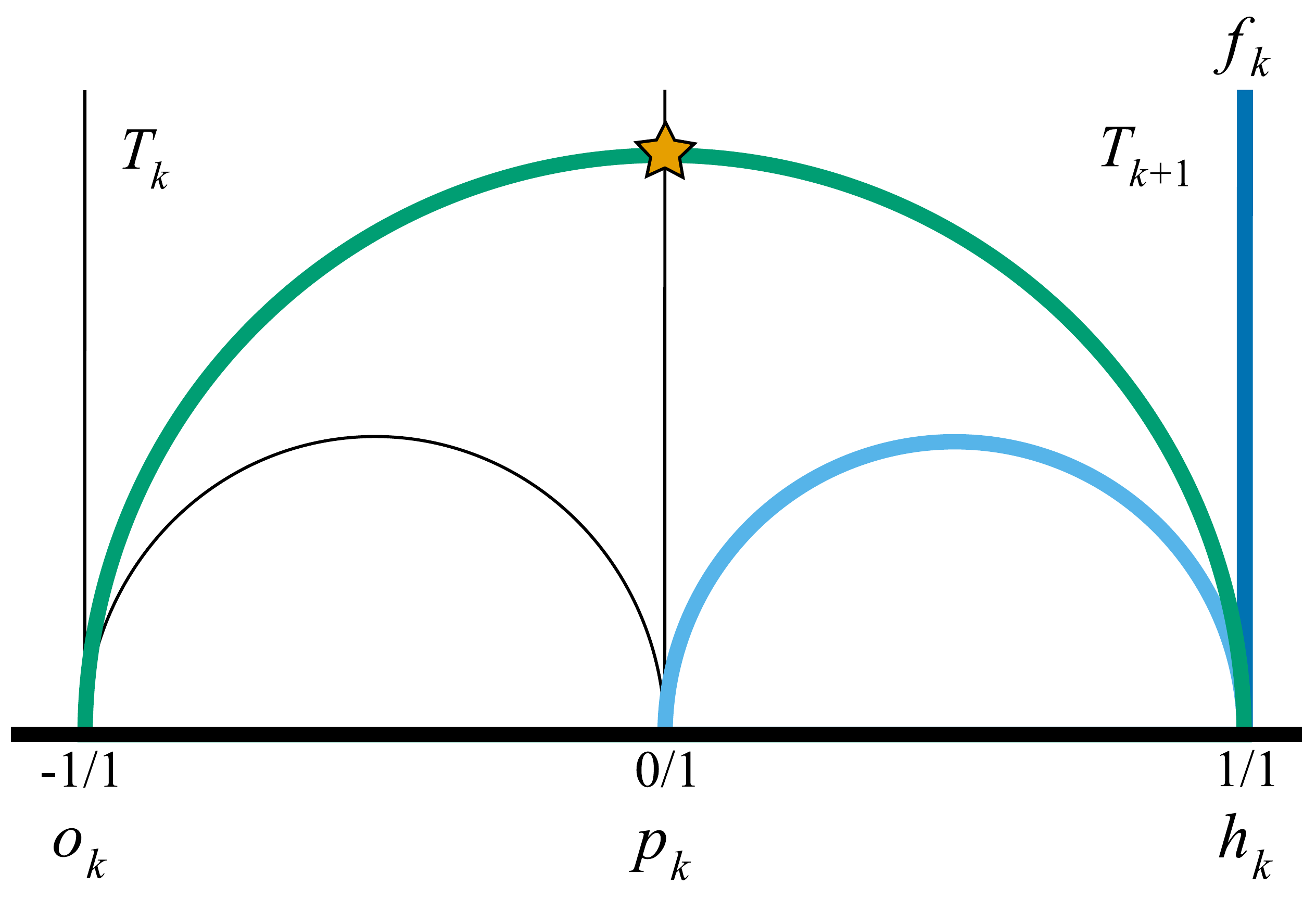}
        \caption{The geodesics corresponding to a tail of length $n=1$} beginning at step $k$.  
        The star indicates the intersection between $\alpha_{o_k}$ and $\mathcal{F}$.
        \label{fig:int_basecase}
    \end{figure}

    In Figure~\ref{fig:int_induction} we see that increasing the length of the tail by 1 increases each of $|\alpha_{o_k}\cap\mathcal{F}|$ and $|\alpha_{f_k}\cap\mathcal{F}|$ by 1, while $|\alpha_{p_k}\cap\mathcal{F}|$ is always 0. Hence, the claim follows by induction.
    
    \begin{figure}[ht]
        \centering
        \includegraphics[width=\textwidth]{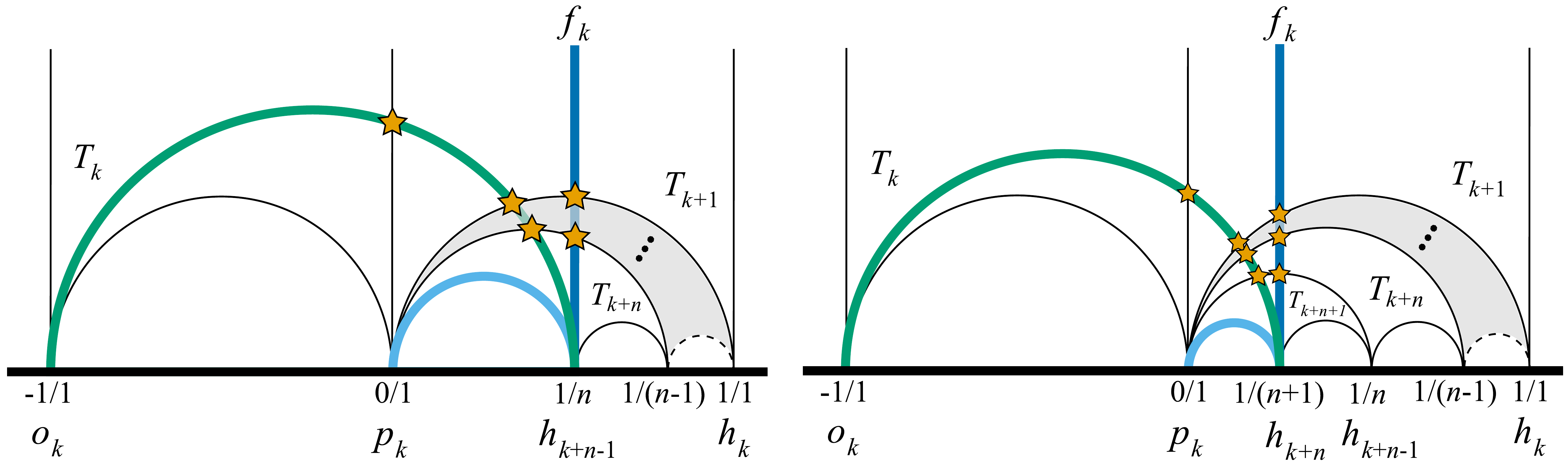}
        \caption{
        The geodesics corresponding to a tail of length $n$ (left) and length $n+1$ (right) beginning at step $k$. The shaded segments are where one should imagine the fan of triangles $T_{k+2}$ through $T_{k+n-1}$. Stars indicate transverse intersections. Note that ${|\alpha_{p_k}\cap\mathcal{F}|=0}$ for both the tails of length $n$ and $n+1$. Meanwhile, each of ${|\alpha_{o_k}\cap\mathcal{F}|}$ and ${|\alpha_{f_k}\cap\mathcal{F}|}$ increase by 1 as the length of the tail increases by 1.}
        \label{fig:int_induction}
    \end{figure}
\end{proof}

\subsection{Applications to A-polynomial calculations}\label{sec:Apolyresult}

 Recall that, apart from changing the labels of slopes from absolute to relative directions, our equations are the same as those in~\cite{HMP}. As such, from Theorem 2.58 in~\cite{HMP}, we know that setting one of the $\gamma$ variables to 1 and solving the system of Ptolemy equations gives the geometric factor of the PSL$(2,\mathbb{C})$ A-polynomial. Moreover, by first making appropriate substitutions for the variables corresponding to the meridian and longitude, we obtain a rational function in $L$ and $M$ that contains the geometric factor of the SL$(2,\mathbb{C})$ A-polynomial (see Corollary 2.59 in~\cite{HMP}). However, finding solutions to such a system directly is again impeded by the increasing number of equations as the triangulation grows. Fortunately, we may use Theorem~\ref{thm:laurent_precise} to simplify this computation.

\begin{theorem}\label{thm:Apoly_precise}
Suppose a knot is obtained from a link complement by Dehn filling using a layered solid torus. Suppose the tail of the layered solid torus has length $n\geq 1$ and begins at step $k$, and suppose the folding equation corresponds to the tip being in the same direction as the tail. The folding equation, along with the set of tail equations, is equivalent to the equation
\[ H_{n} - \gamma_{f_k}^{n-1}\gamma_{o_k}^{n}\gamma_{p_k} = 0. \]
\end{theorem}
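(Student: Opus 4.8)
The plan is to read off the claimed equation directly by feeding the Laurent expression of Theorem~\ref{thm:laurent_precise} into the folding equation; no new induction is needed, since the hard inductive work is already done. First I would fix the indexing. If the tail has length $n$ and begins at step $k$, it occupies steps $k, k+1, \dots, k+n-1$, and the fold is the very next step, step $k+n$. The $n$ tail equations are exactly the Ptolemy equations of Theorem~\ref{thm:HMP} for these steps, and Theorem~\ref{thm:laurent_precise} tells us they eliminate all the intermediate head variables $\gamma_{h_k}, \dots, \gamma_{h_{k+n-1}}$ in favour of $\gamma_{f_k}, \gamma_{o_k}, \gamma_{p_k}$, culminating in
\[ \gamma_{h_{k+n-1}} = \frac{H_n}{\gamma_{f_k}^{n-1}\gamma_{o_k}^n}. \]

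Next I would translate the folding equation. The fold at step $k+n$ contributes the folding equation $\gamma_{p_{k+n}} = \gamma_{f_{k+n}}$ stated after Theorem~\ref{thm:HMP}. The key step is to rewrite the slopes $p_{k+n}$ and $f_{k+n}$ in terms of the tail slopes. Because the tip is assumed to point in the same direction as the tail, the fold step continues the maximal same-direction run, so the slope identifications already used inside the tail (namely $\gamma_{f_{j+1}} = \gamma_{h_j}$ and $\gamma_{p_{j+1}} = \gamma_{p_j}$) persist for one more step. Concretely this gives $\gamma_{f_{k+n}} = \gamma_{h_{k+n-1}}$ and $\gamma_{p_{k+n}} = \gamma_{p_k}$, so the folding equation collapses to $\gamma_{p_k} = \gamma_{h_{k+n-1}}$.

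Finally I would substitute the Laurent expression and clear denominators:
\[ \gamma_{p_k} = \frac{H_n}{\gamma_{f_k}^{n-1}\gamma_{o_k}^n} \iff H_n - \gamma_{f_k}^{n-1}\gamma_{o_k}^n\gamma_{p_k} = 0, \]
which is the claimed equation. Since the tail equations act as definitions that express each intermediate variable uniquely in terms of $\gamma_{f_k}, \gamma_{o_k}, \gamma_{p_k}$, the conjunction of the tail equations and the folding equation is equivalent, after eliminating those intermediate variables, to this single polynomial equation; reversing the substitution recovers the folding equation, giving both directions of the equivalence. A quick sanity check with $n=1$, where $H_1 = \gamma_{f_k}^2 - \gamma_{p_k}^2$ and the equation reads $\gamma_{o_k}\gamma_{p_k} = \gamma_{f_k}^2 - \gamma_{p_k}^2$, confirms the bookkeeping.

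I expect the main obstacle to be the middle step: rigorously justifying the slope identifications $\gamma_{f_{k+n}} = \gamma_{h_{k+n-1}}$ and $\gamma_{p_{k+n}} = \gamma_{p_k}$ at the fold. This requires tracking the corresponding walk in the Farey triangulation and verifying that a same-direction run of steps pivots around a single fixed vertex (the slope $p_k$) while the fanning vertex advances to $h_{k+n-1}$. The hypothesis that the tip matches the direction of the tail is precisely what guarantees that the fold inherits these tail relations rather than pivoting around a different vertex; the remaining manipulation is routine algebra.
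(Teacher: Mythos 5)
Your proposal is correct and follows essentially the same route as the paper's own proof: invoke Theorem~\ref{thm:laurent_precise} to replace the tail equations by the single Laurent expression for $\gamma_{h_{k+n-1}}$, use the identifications $\gamma_{p_{k+n}}=\gamma_{p_k}$ and $\gamma_{f_{k+n}}=\gamma_{h_{k+n-1}}$ (which hold because the tip continues the tail's direction) to reduce the folding equation to $\gamma_{h_{k+n-1}}=\gamma_{p_k}$, and clear denominators. The only difference is cosmetic: you flag the slope identifications at the fold as the step needing care, whereas the paper asserts them without further comment.
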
 

\begin{proof}
    In the proof of Theorem~\ref{thm:laurent_precise}, we saw that the set of $n$ tail equations is equivalent to the equation
    \[ \gamma_{h_{k+n-1}}=\frac{H_{n}}{\gamma_{f_k}^{n-1}\gamma_{o_k}^{n}}. \]
    The folding equation corresponding to the tip in the same direction as the tail is $\gamma_{p_{k+n}}=\gamma_{f_{k+n}}$  (from Theorem~\ref{thm:HMP} with $N=n+k$). However, recall from Figure~\ref{fig:eq_slopes} that: all pivot slopes in the tail are equal, so $\gamma_{p_{k+n}}=\gamma_{p_{k}}$; and the fan slope of the $(k+n)^{th}$ step is equal to the heading slope of the $(k+n-1)^{st}$ step, so $\gamma_{f_{k+n}}=\gamma_{h_{k+n-1}}$. Hence, we set $\gamma_{h_{k+n-1}}$ equal to $\gamma_{p_k}$ and rearrange to get
    \[ H_{n} - \gamma_{f_k}^{n-1}\gamma_{o_k}^{n}\gamma_{p_k} = 0, \text{ as desired.} \]
\end{proof}

 The above result consolidates all equations associated with the tip and tail of a layered solid torus, however, to compute A-polynomials, we also require: the finitely many equations coming from the head and body of the layered solid torus; and the finitely many equations corresponding to the tetrahedra that triangulate the parent link. As seen in~\cite{HMP}, when the meridian or longitude intersect a tetrahedron in the parent link, the corresponding equation involves the variables $L$ and $M$. Such equations can be used to express $\gamma$ variables in terms of $L$ and $M$.

\begin{corollary}\label{cor:Apoly_precise}
    When $\gamma_{f_k},\gamma_{o_k}$, and $\gamma_{p_k}$ are expressed in terms of $L$ and $M$ (using the equations from the parent link and the body of the Dehn filling), the rational function ${H_{n} - \gamma_{f_k}^{n-1}\gamma_{o_k}^{n}\gamma_{p_k}}$ contains the geometric factor of the SL$(2,\mathbb{C})$ A-polynomial for the corresponding knot. 
\end{corollary}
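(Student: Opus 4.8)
The plan is to derive this directly from Theorem~\ref{thm:Apoly_precise} together with the A-polynomial machinery of Howie, Mathews and Purcell, since the corollary is essentially the translation of Theorem~\ref{thm:Apoly_precise} into the language of $L$ and $M$. The full Ptolemy system defining the deformation variety of $K_n$ splits into four groups of equations: those coming from the parent link, those from the body of the layered solid torus, those from its tail, and the folding equation. By Theorem 2.58 of~\cite{HMP}, normalising by setting one $\gamma$ variable to $1$ and solving this complete system yields the geometric factor of the PSL$(2,\mathbb{C})$ A-polynomial; by Corollary 2.59 of~\cite{HMP}, first substituting the expressions for the meridian and longitude variables in terms of $L$ and $M$ produces a rational function in $L$ and $M$ that contains the geometric factor of the SL$(2,\mathbb{C})$ A-polynomial. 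The task is therefore to show that the tail and folding equations may be eliminated in favour of the single equation of Theorem~\ref{thm:Apoly_precise} without disturbing the geometric component.

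First I would invoke Theorem~\ref{thm:Apoly_precise} to replace the tail equations together with the folding equation by the single equation $H_{n} - \gamma_{f_k}^{n-1}\gamma_{o_k}^{n}\gamma_{p_k}=0$. Because Theorem~\ref{thm:Apoly_precise} asserts an \emph{equivalence} of the two systems, the locus cut out by the complete Ptolemy system agrees with that cut out by the parent link equations, the body equations, and this single equation. Next I would use the parent link and body equations to express $\gamma_{f_k}$, $\gamma_{o_k}$ and $\gamma_{p_k}$ as rational functions of $L$ and $M$, exactly as in the elimination underlying Corollary 2.59 of~\cite{HMP}. Substituting these expressions into $H_{n} - \gamma_{f_k}^{n-1}\gamma_{o_k}^{n}\gamma_{p_k}$ then yields a rational function purely in $L$ and $M$, and the claim is that this is precisely (a factor-compatible version of) the rational function produced by the HMP elimination, so that it inherits the geometric factor.

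The main obstacle will be verifying that this elimination preserves the geometric component rather than merely reproducing the set-theoretic variety. Concretely, I would check that $\gamma_{f_k}$ and $\gamma_{o_k}$ do not vanish on the geometric component, so that the Laurent expression $\gamma_{h_{k+n-1}}=H_{n}/(\gamma_{f_k}^{n-1}\gamma_{o_k}^{n})$ from Theorem~\ref{thm:laurent_precise} remains valid there and the passage to the single equation does not clear a denominator that annihilates the geometric solution. Granting this, the discrete faithful representation associated with the complete hyperbolic structure satisfies every Ptolemy equation, hence the tail and folding equations; by the equivalence of Theorem~\ref{thm:Apoly_precise} it therefore satisfies $H_{n} - \gamma_{f_k}^{n-1}\gamma_{o_k}^{n}\gamma_{p_k}=0$ after the substitution. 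Since ``contains'' only requires the geometric factor to be a divisor, I need not rule out spurious extra factors: it suffices that the geometric factor vanishes on the locus defined by the resulting rational function, so that it divides the numerator, which is exactly the asserted containment.
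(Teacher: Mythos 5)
Your proposal is correct and takes essentially the same route as the paper, whose entire proof is the one-line observation that the claim follows from Theorem~\ref{thm:Apoly_precise} together with Corollary 2.59 of~\cite{HMP}. Your additional care about $\gamma_{f_k}$ and $\gamma_{o_k}$ not vanishing on the geometric component is a reasonable refinement, but it is not part of the paper's argument and does not change the approach.
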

\begin{proof}
    This follows from the previous theorem and Corollary 2.59 of~\cite{HMP}.
\end{proof}

Once we have $\gamma_{f_k},\gamma_{o_k}$, and $\gamma_{p_k}$ expressed in terms of $L$ and $M$, Theorem~\ref{thm:Apoly_precise} gives a family of rational functions in $L$ and $M$ depending only on $n$. This means that the main barrier to effective computation of these rational functions is in finding $\gamma_{f_k},\gamma_{o_k}$, and $\gamma_{p_k}$ in terms of $L$ and $M$, which depends on the Ptolemy equations of the tetrahedra required to triangulate the parent link complement. In particular, this means that if a parent link admits an appropriate triangulation consisting of few tetrahedra, the A-polynomials for fillings of this link are readily computable. We now demonstrate the power of this result by applying it in the context of two families of knots related by twisting.

\section{Example calculations}\label{sec:examples}
In this section we see how Theorem~\ref{thm:Apoly_precise} can be applied to A-polynomial calculations for two families of knots related by twisting: the twisted torus knots $T(5,1-5m,2,2)$ and the twist knots $J(2,2m)$.  Throughout this section the variable $m$ is used in relation to $1/m$ Dehn fillings and the variable $n$ is used with reference to the length of a tail in a layered solid torus.

\subsection{A family of twisted torus knots}\label{sec:WHsis}

The $(-2,3,8)$-pretzel link, shown in Figure~\ref{fig:WHsis} (left), is a two-component link with a simple triangulation. It may also be presented as an augmented twisted torus knot as in Figure~\ref{fig:WHsis} (right). 
Notice that one of the components (shown in blue) is unknotted. By performing $1/m$ Dehn fillings on the blue component we generate the infinite family of twisted torus knots $T(5,1-5m,2,2)$~\cite{HMPT}. Note that we are using the twisted torus knot notation used in~\cite{ckm}.

\begin{figure}
    \centering
    \includegraphics[width=0.6\textwidth]{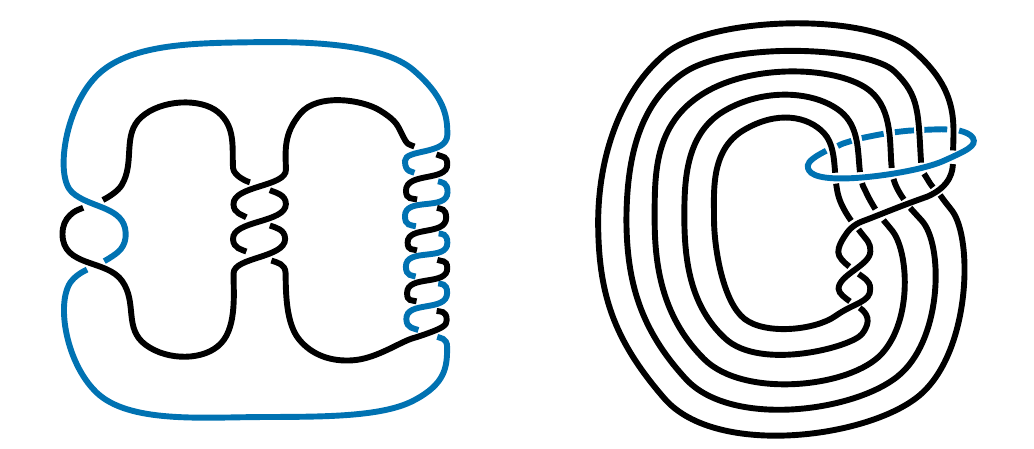}
    \caption{The $(-2,3,8)$-pretzel link with unknotted component shown in blue. On the left is the standard pretzel diagram and on the right, we see an alternative diagram that elucidates why $1/m$ Dehn fillings generate the $T(5,1-5m,2,2)$ twisted torus knots.}
    \label{fig:WHsis}
\end{figure}

Howie, Mathews, Purcell and the author study this link in~\cite{HMPT}, using the triangulation given in Figure~\ref{fig:WHsisregtri} (with notation as in Regina~\cite{Regina}). Observe that tetrahedra 2 and 3 glue only to each other and one face each of tetrahedra 0 and 1. Vertices 2(0) and 3(0) meet the unknotted cusp and all other vertices meet the other cusp. To perform Dehn fillings on the unknotted cusp we remove tetrahedra 2 and 3, leaving a once-punctured torus boundary triangulated by the faces 0(012) and 1(012). We may then glue an appropriate layered solid torus to these exposed faces to make the Dehn filling slope homotopically trivial.

\begin{figure}
\begin{center}
\begin{tabular}{c|c|c|c|c}
    Tetrahedron & Face 012 & Face 013 & Face 023 & Face 123 \\
    \hline
    0 & 2(312) & 1(023) & 1(312) & 1(031) \\
    1 & 3(123) & 0(132) & 0(013) & 0(230) \\
    2 & 3(021) & 3(031) & 3(032) & 0(120) \\
    3 & 2(021) & 2(031) & 2(032) & 1(012) 
\end{tabular}
\end{center}
\caption{A triangulation of the $(-2,3,8)$-pretzel link complement in Regina notation.}
\label{fig:WHsisregtri}
\end{figure}

 The Ptolemy equations for the outside tetrahedra were determined in~\cite{HMPT} to be 
\begin{align}
    M\gamma_{1/0}\gamma_{4/1} - M^2\gamma_{4/1}\gamma_{3/1} - L\gamma_{3/1}^2 &=0 \text{ for tetrahedron 0, and } \label{eq:tet0}\\
    - M^2\gamma_{3/1}^2 + L M\gamma_{1/0}\gamma_{4/1} - L \gamma_{3/1}\gamma_{4/1} &=0 \text{ for tetrahedron 1}. \label{eq:tet1}
\end{align}
These $\gamma$ variables are labelled according to the slopes of the corresponding edge classes; determining these slopes (namely, $3/1$, $4/1$, and $1/0$) is a non-trivial task that was done in~\cite{HMPT}. These equations differ slightly from the equations of~\cite{HMPT}, since here we have multiplied through by powers of $L$ and $M$ to remove negative exponents.

\subsubsection{Using the Farey triangulation}
To apply Theorem~\ref{thm:Apoly_precise}, we must determine paths in the Farey triangulation describing the construction of appropriate layered solid tori. This was done in~\cite{HMPT}.  
 Since the slopes of the boundary edges are $3/1$, $4/1$, and $1/0$, the starting Farey triangle is the one with vertices labelled by these rational numbers.

To perform $+1/m$ Dehn fillings we follow the path indicated in blue in Figure~\ref{fig:Fareypaths} and to perform $-1/m$ Dehn fillings we follow the path indicated in orange.  These paths can be described by the words L$^{2}$RL$^{m-2}$ and L$^{3}$R$^{m-1}$, respectively. Recall that the final L or R corresponds to the tip representing the fold, so this means that the layered solid torus used for a $+1/m$ Dehn filling has a tail of length $n=m-3$, while the layered solid torus used for a $-1/m$ Dehn filling has a tail of length $n=m-2$.

\begin{figure}
    \centering
    \includegraphics[width=0.6\textwidth]{}
    \caption{Paths in the Farey diagram corresponding to $\pm 1/m$ Dehn fillings of the $(-2,3,8)$-pretzel link. The starting triangle is shaded grey and the triangles where each of the tails begin are indicated in blue and orange, for the positive and negative Dehn fillings, respectively. The twisted torus knot obtained by performing each Dehn filling is also noted.}
    \label{fig:Fareypaths}
\end{figure} 

Theorem~\ref{thm:Apoly_precise} applies for tails of length $n\geq 1$ with the tip in the same direction, so here we consider Dehn fillings with slopes $+1/m$ for $m\geq 4$ and $-1/m$ for $m\geq 3$. The tails for both the positive and negative Dehn fillings each start at step~4. Steps 0, 1 and 2 are the same for both paths, and using Theorem~\ref{thm:HMP}, we obtain their corresponding Ptolemy equations 
\begin{align}
    \gamma_{4/1}\gamma_{2/1}+\gamma_{3/1}^2-\gamma_{1/0}^2&=0, \label{eq:step0}\\
    \gamma_{3/1}\gamma_{1/1}+\gamma_{1/0}^2-\gamma_{2/1}^2&=0, \label{eq:step1}\\
    \gamma_{2/1}\gamma_{0/1}+\gamma_{1/0}^2-\gamma_{1/1}^2&=0. \label{eq:step2}
\end{align}

For positive Dehn fillings, step 3 is a right step and hence corresponds to the Ptolemy equation
\begin{equation}
    \gamma_{1/0}\gamma_{1/2}+\gamma_{1/1}^2-\gamma_{0/1}^2=0. \label{eq:step3pos}
\end{equation}
The tail begins at step 4 and we have $o_4=1/1$, $p_4=0/1$ and $f_4=1/2$. Hence, by Theorem~\ref{thm:Apoly_precise}, the equations for the tail of length $n\geq 1$  (corresponding to Dehn fillings of slope $+1/m$ for $m\geq 4$) are equivalent to the equation
\begin{equation}
    \gamma_{1/2}^{2n} + \sum_{a+b\leq n-1} (-1)^{n-a-b}\binom{n-1-a}{b}\binom{n-b}{a}\gamma_{1/2}^{2a}\gamma_{1/1}^{2b}\gamma_{0/1}^{2(n-a-b)} - \gamma_{1/1}^{n}\gamma_{1/2}^{n-1}\gamma_{0/1} = 0. \label{eq:tailpos}
\end{equation}

For negative Dehn fillings, step 3 is a left step and therefore corresponds to the Ptolemy equation
\begin{equation}
    \gamma_{1/1}\gamma_{-1/1}+\gamma_{1/0}^2-\gamma_{0/1}^2=0. \label{eq:step3neg}
\end{equation}
The tail begins at step 4 and we have $o_4=1/0$, $p_4=0/1$ and ${f_4=-1/1}$. Hence, by Theorem~\ref{thm:Apoly_precise}, the equations for the tail of length $n\geq 1$  (corresponding to Dehn fillings of slope $-1/m$ for $m\geq 3$) are equivalent to the equation
\begin{equation}
    \gamma_{-1/1}^{2n} + \sum_{a+b\leq n-1} (-1)^{n-a-b}\binom{n-1-a}{b}\binom{n-b}{a}\gamma_{-1/1}^{2a}\gamma_{1/0}^{2b}\gamma_{0/1}^{2(n-a-b)} - \gamma_{1/0}^{n}\gamma_{-1/1}^{n-1}\gamma_{0/1} = 0. \label{eq:tailneg}
\end{equation}

\subsubsection{A-polynomials for positive Dehn fillings}
The equations \eqref{eq:tet0} through \eqref{eq:tailpos} define a rational function that contains the geometric factor of the A-polynomial of the knot obtained by  $1/(n+3)$ Dehn filling of the $(-2,3,8)$-pretzel link, for $n\geq 1$.

We set $\gamma_{3/1}=1$ and use equations~\eqref{eq:tet0} through \eqref{eq:step3pos} to write $\gamma_{0/1},\gamma_{1/1}$ and $\gamma_{1/2}$ entirely in terms of $L$ and $M$. These can be found in Appendix~\ref{app:gammas}. With these substitutions, \eqref{eq:tailpos} becomes a formula for rational functions that contain the geometric factor of the A-polynomials for the twisted torus knots  $T(5,1-5(n+3),2,2)$, with $n\geq 1$. 

\subsubsection{A-polynomials for negative Dehn fillings}
The equations~\eqref{eq:tet0} through~\eqref{eq:step2}, along with equations~\eqref{eq:step3neg} and~\eqref{eq:tailneg} define a rational function that contains the geometric factor of the A-polynomial of the knot obtained by  $-1/(n+2)$ Dehn filling of the $(-2,3,8)$-pretzel link, for $n\geq 1$.

Again, we set $\gamma_{3/1}=1$ and use equations~\eqref{eq:tet0} through~\eqref{eq:step2} and equation~\eqref{eq:step3neg} to write $\gamma_{1/0},\gamma_{-1/1}$ and $\gamma_{0/1}$ entirely in terms of $L$ and $M$. These can also be found in Appendix~\ref{app:gammas}. With these substitutions, \eqref{eq:tailneg} becomes a formula for rational functions that contain the geometric factor of the A-polynomials for the twisted torus knots  $T(5,1+5(n+2),2,2)$, with $n\geq 1$. 

\subsubsection{Changing basis}
As discussed in~\cite{HMPT}, the choice of generators for the cusp homology were not the standard basis for the link in $S^3$. While we used the actual meridian, we did not use the preferred longitude. For the positive Dehn fillings, the required change of basis in the A-polynomial variables is  $(L,M)\mapsto (LM^{8-25m},M)$ for each $m=n+3\geq 4$. For the negative Dehn fillings, the required change of basis is  $(L,M)\mapsto (LM^{8+25m},M)$ for each $m=n+2\geq 3$.

\subsubsection{Comparing with what is known}
The twisted torus knots $T(5,16,2,2)$ and $T(5,-19,2,2)$ are equivalent to the census knots $K7_3$ and $K7_4$, respectively. After changing basis as above, and multiplying through by powers of $L$ and $M$ to remove negative exponents, the largest factors seen in the output of our formulas match the A-polynomials found by Culler~\cite{Culler}. For example, with substitutions as given in Appendix~\ref{app:gammas}, and with $n=1$, equation~\eqref{eq:tailneg} gives a polynomial with four factors, the largest of which has 455 terms. After changing basis, this factor is identical to the A-polynomial given for $K7_3$ on Culler's website~\cite{Culler}.

The knots $T(5,21,2,2)$ and $T(5,-24,2,2)$ are equivalent to the census knots $K8_3$ and $K8_4$, respectively, and our formula immediately gives a rational function containing the geometric factor of their A-polynomials despite their very large size; the largest factors of these have 784 and 952 terms, respectively. Since the A-polynomials for these knots do not appear on Culler's database we cannot compare.

\subsection{The twist knots}\label{sec:WHlink}

It is well known that the twist knots may be obtained by $1/m$ Dehn fillings of the complement of the Whitehead link (shown in Figure~\ref{fig:WHlink}, left). We use the notation $J(2,l)$ to mean the twist knot with $l$ right-handed crossings in the bottom twist region (as seen in Figure~\ref{fig:WHlink}, right). The $+1/m$ Dehn fillings of the Whitehead link therefore generate the family of twist knots $J(2,2m)$ and $-1/m$ Dehn fillings generate the family $J(2,-2m)$. 

\begin{figure}
    \centering
    \includegraphics[width=0.6\textwidth]{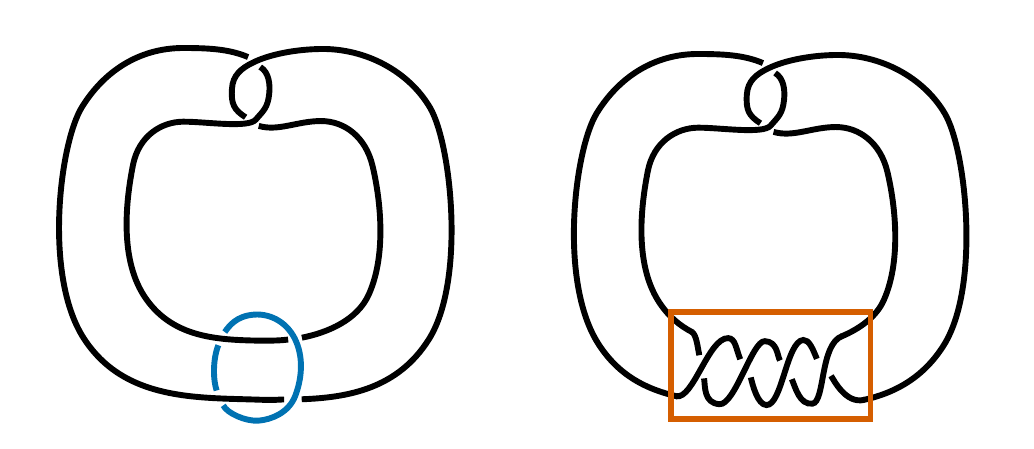}
    \caption{Left: The Whitehead link, with unknotted component to be Dehn filled shown in blue. Right: The twist knot $J(2,-4)$, with 4 left-handed crossings inside the twist region indicated by the orange box.}
    \label{fig:WHlink}
\end{figure}

In this section we apply Theorem~\ref{thm:Apoly_precise} to the family of twist knots obtained by Dehn filling the Whitehead link using layered solid tori. We use the same triangulation as in~\cite{HMP}, which is given in Regina notation in Figure~\ref{fig:WHlinktri}. The Dehn fillings are performed by replacing tetrahedra 3 and 4 with layered solid tori. Each of the three outside tetrahedra contribute a Ptolemy equation, which were found in~\cite{HMP}. We make the substitutions $\ell=L^2$ and $m=M^2$, and multiply through by powers of $L$ and $M$ to remove negative exponents.  We use the same $\gamma$ labels, including $\gamma_{0(23)}$, which is associated to the edge class in the triangulation that contains the $23$ edge of tetrahedron $0$. This labelling reflects the fact that this edge class does not appear in the cusp being filled and therefore does not have a well-defined slope. The equations are
\begin{align}
    -LM\gamma_{0(23)}\gamma_{2/1}-L\gamma_{3/1}\gamma_{1/0}-M^{2}\gamma_{1/0}^2&=0, \label{eq:1WHlink} \\
    -M^2\gamma_{3/1}\gamma_{1/0}-L\gamma_{1/0}^2-M\gamma_{0(23)}\gamma_{2/1}&=0, \label{eq:2WHlink} \\
    \gamma_{1/0}^2-\gamma_{1/0}\gamma_{3/1}-\gamma_{0(23)}^2&=0. \label{eq:3WHlink}
\end{align}

\begin{figure}
\begin{center}
\begin{tabular}{c|c|c|c|c}
    Tetrahedron & Face 012 & Face 013 & Face 023 & Face 123 \\
    \hline
    0 & 3(021) & 1(213) & 2(130) & 1(230) \\
    1 & 4(102) & 2(132) & 0(312) & 0(103) \\
    2 & 2(203) & 0(302) & 2(102) & 1(031) \\
    3 & 0(021) & 4(103) & 4(203) & 4(213) \\
    4 & 1(102) & 3(103) & 3(203) & 3(213) 
\end{tabular}
\end{center}
\caption{A triangulation of the Whitehead link complement in Regina notation.}
\label{fig:WHlinktri}
\end{figure}

The paths in the Farey triangulation corresponding to the $\pm 1/m$ Dehn fillings of the Whitehead link were determined by Howie, Mathews and Purcell in~\cite{HMP} and are shown in Figure~\ref{fig:FareypathsWHlink}.  The words describing the positive and negative paths are LRL$^{m-2}$ and L$^{2}$R$^{m-1}$, respectively. Given that the final L or R corresponds to the tip representing the fold, we have tails of lengths $n=m-3$ and $n=m-2$, respectively. From this we see that Theorem~\ref{thm:Apoly_precise} applies to the calculation of A-polynomials for the twist knots $J(2,2m)$ for $m\geq 4$ and $J(2,-2m)$ for $m\geq 3$. The tails of each of the positive and negative Dehn fillings start at step 3, with both paths sharing steps 0 and 1. The Ptolemy equations for steps 0 and 1 are
\begin{align}
    \gamma_{3/1}\gamma_{1/1}+\gamma_{2/1}^2-\gamma_{1/0}^2&=0, \text{ and} \label{eq:4WHlink} \\
    -\gamma_{2/1}\gamma_{0/1}+\gamma_{1/1}^2-\gamma_{1/0}^2&=0. \label{eq:5WHlink}
\end{align}

\begin{figure}
    \centering
    \includegraphics[width=0.6\textwidth]{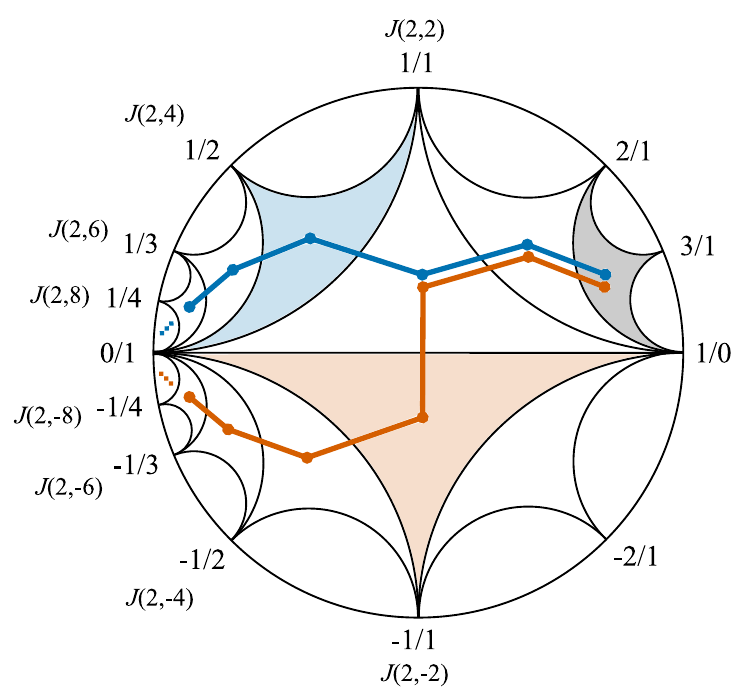}
    \caption{Paths in the Farey diagram corresponding to $\pm 1/m$ Dehn fillings of the Whitehead link. The starting triangle is shaded grey and the triangles where each of the tails begin are indicated in blue and orange, for the positive and negative Dehn fillings, respectively. The twist knot obtained by performing each Dehn filling is also noted.}
    \label{fig:FareypathsWHlink}
\end{figure}

For $+1/m$ Dehn fillings, step 2 is a right step and therefore corresponds to the Ptolemy equation
\begin{equation}
    \gamma_{1/2}\gamma_{1/0}+\gamma_{0/1}^2-\gamma_{1/1}^2=0. \label{eq:step2pos}
\end{equation}
Meanwhile, for $-1/m$ Dehn fillings, step 2 is a left step, so corresponds to the Ptolemy equation 
\begin{equation}
    \gamma_{-1/1}\gamma_{1/1}+\gamma_{0/1}^2-\gamma_{1/0}^2=0. \label{eq:step2neg}
\end{equation}

Next we express $\gamma_{1/1},\gamma_{0/1},\gamma_{1/2},\gamma_{-1/1}$ and $\gamma_{1/0}$ in terms of only $L$ and $M$. We set $\gamma_{1/0}$ equal to 1 and use equations~\eqref{eq:1WHlink} through~\eqref{eq:5WHlink} to express $\gamma_{1/1}$ and $\gamma_{0/1}$ as follows:
\begin{align*}
    \gamma_{1/1}&= M^{-2}(L-1)^{-1}(L-M^4) \\
    \gamma_{0/1}&= -M^{-3}L^{1/2}(L-1)^{-3/2}(M-1)(M+1)(M^2-1)^{1/2}(M^2+1)(L+M^2)^{1/2}.
\end{align*}

For $+1/m$ Dehn fillings, we rearrange \eqref{eq:step2pos} to get
\begin{align*}
    \gamma_{1/2} &= M^{-6}(L-1)^{-3}(L^2 +L M^2- 2L^2 M^2 +L^3 M^2- L M^4 - 2L^2 M^4 +2 L M^8 + L^2 M^8 \\
    & \qquad - M^{10}+ 2L M^{10} - L^2 M^{10} - L M^{12}).
\end{align*}

For $-1/m$ Dehn fillings, we rearrange \eqref{eq:step2neg} to get
\begin{equation*}
    \gamma_{-1/1}= M^{-4}(L-1)^{-2}(L+M^2-L M^2- 2L M^4- L M^6 + L^2 M^6 + L M^8).
\end{equation*}

For the positive Dehn fillings, Theorem~\ref{thm:Apoly_precise} tells us that the A-polynomial for  $J(2,2(n+3))$, for $n\geq1$, contains a factor of the rational function given by 
\begin{equation}
     \gamma_{1/2}^{2n} + \sum_{a+b\leq n-1} (-1)^{n-a-b}\binom{n-1-a}{b}\binom{n-b}{a}\gamma_{1/2}^{2a}\gamma_{1/1}^{2b}\gamma_{0/1}^{2(n-a-b)} - \gamma_{1/1}^{n}\gamma_{1/2}^{n-1}\gamma_{0/1} =0. 
\end{equation}
\begin{remark}
    As stated, the output of this equation involves fractional exponents, however, these can be removed by conjugating appropriately. 
\end{remark}

Meanwhile, for negative Dehn fillings, the A-polynomial for  $J(2,-2(n+2))$ contains a factor of the rational function given by 
\begin{equation}
     \gamma_{1/2}^{2n} + \sum_{a+b\leq n-1} (-1)^{n-a-b}\binom{n-1-a}{b}\binom{n-b}{a}\gamma_{-1/1}^{2a}\gamma_{1/0}^{2b}\gamma_{0/1}^{2(n-a-b)} - \gamma_{1/0}^{n}\gamma_{-1/1}^{n-1}\gamma_{0/1} = 0. 
\end{equation}
Again, conjugation is needed to remove fractional exponents.

\subsubsection{Comparing with what is known}
The A-polynomials for the twist knots were shown by Hoste and Shanahan to be irreducible. As such, we expect the output of our formulas to contain precisely the A-polynomial of each twist knot. Using the explicit formulas of Mathews~\cite{Mathews2014,Mathews2014err}, we verify that the largest factor of our output is indeed the A-polynomial for each of the twist knots $J(2,2m)$ for $m\in\left[-8,-3\right]\cup\left[4,8\right]$. Note that a change of basis is required, namely $(L,M)\mapsto(-LM^{-2},M)$. This change of basis does not depend on the Dehn filling slope, since the linking number of the two components of the Whitehead link is $0$.

The behaviour seen in the output of our formulas uncovers a new recursive relationship in the A-polynomials of twist knots. In the following, we let 
\begin{align*}
        x&= -L +L^2 +2LM^2 +M^4+2LM^4+L^2M^4+2LM^6 +M^8 -LM^8, \\
        y&= M^4 (L+M^2)^4,\text{ and }\\
        z&=L (M^2-1)^3(M^2+1)^2(L-M^4).
    \end{align*}
\begin{theorem}
    Let $A_{m}^+$ be the A-polynomial of the twist knot $J(2,2m)$ and let $A_{m}^-$ be the A-polynomial of the twist knot $J(2,-2m)$. With initial conditions below,
    \begin{align*}
    A_{m-1}^+ \cdot A_{m+1}^+ &= (A_m^+)^2 + y^{m-2}z M^{4}(L+M^2)^{3}, \text{ for } m>1, \text{ and } \\
    A_{m-1}^- \cdot A_{m+1}^- &= (A_m^-)^2 + y^{m-1}z (L+M^2), \text{ for } m>0.
    \end{align*}

Initial conditions:
\begin{align*}
    A_{1}^-&= -L + LM^2 + M^4 +2 LM^4 + L^2 M^4 + LM^6 - LM^8  \\
    A_0^-&=1 \\
    A_{1}^+&= L + M^6 \\
    A_{2}^+&= -L^2 + L^3 + 2L^2M^2 + LM^4 + 2L^2M^4 -LM^6 - L^2M^8 \\
    &\quad +2LM^{10} + L^2M^{10} + 2LM^{12} +M^{14} -LM^{14} 
\end{align*}
\end{theorem}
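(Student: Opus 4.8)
The plan is to reduce the stated quadratic (Plücker/Somos-type) relation to the Casorati determinant of a second-order \emph{linear} recurrence for the normalised A-polynomials. Concretely, I would first establish that $A_{n+1}^{\pm}=x\,A_n^{\pm}-y\,A_{n-1}^{\pm}$; this is presumably the companion half of the statement and explains why $x$ is introduced. Granting it, set $D_n^{\pm}:=A_{n-1}^{\pm}A_{n+1}^{\pm}-(A_n^{\pm})^2$. A one-line manipulation of the linear recurrence gives $D_n^{\pm}=y\,D_{n-1}^{\pm}$, so $D_n^{\pm}=y^{\,n-n_0}D_{n_0}^{\pm}$ for the appropriate base index $n_0$. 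Evaluating $D_{n_0}^{\pm}$ on the listed initial conditions then produces the constants $z\,M^{4}(L+M^2)^3$ and $z\,(L+M^2)$, and the different base indices ($n_0=2$ for the $+$ family, $n_0=1$ for the $-$ family, reflecting the different tail-start data) account for the different powers $y^{\,n-2}$ and $y^{\,n-1}$. This finishing step is routine once the linear recurrence and the initial data are in hand.

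To obtain the linear recurrence I would work from the structure already established. By Lemma~\ref{lem:pna} the polynomials $H_m$ obey the second-order linear recurrence $H_{m+1}=(\gamma_{f_k}^2+\gamma_{o_k}^2-\gamma_{p_k}^2)H_m-\gamma_{f_k}^2\gamma_{o_k}^2H_{m-1}$, and by Theorem~\ref{thm:Apoly_precise} the A-polynomial of the corresponding twist knot arises as the numerator of $H_m-\gamma_{f_k}^{m-1}\gamma_{o_k}^m\gamma_{p_k}$ after substituting the explicit rational functions $\gamma_{1/2},\gamma_{1/1},\gamma_{0/1}$ (resp.\ $\gamma_{-1/1},\gamma_{1/0},\gamma_{0/1}$) of $L$ and $M$, clearing denominators, extracting the irreducible factor, and applying the change of basis $(L,M)\mapsto(-LM^{-2},M)$. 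The aim is to show that after this normalisation the two fixed combinations $\gamma_{f_k}^2+\gamma_{o_k}^2-\gamma_{p_k}^2$ and $\gamma_{f_k}^2\gamma_{o_k}^2$ descend to $x$ and $y$ respectively, so that the sequence of A-polynomials inherits the recurrence of $H_m$.

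The main obstacle is precisely this descent. The subtracted geometric term $\gamma_{f_k}^{m-1}\gamma_{o_k}^{m}\gamma_{p_k}$ prevents the defining polynomial from satisfying the linear recurrence on the nose: computing the Casorati determinant of $H_m-\gamma_{f_k}^{m-1}\gamma_{o_k}^m\gamma_{p_k}$ directly, using Lemma~\ref{lem:Hrecurrence} together with the fact that $\gamma_{f_k}^{m-1}\gamma_{o_k}^m\gamma_{p_k}$ is geometric, leaves a residual term proportional to $H_m\,\big(\gamma_{p_k}^2-(\gamma_{f_k}-\gamma_{o_k})^2\big)$, and this factor does not vanish for the Whitehead substitution. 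The order-two structure therefore emerges only after the $m$-growing denominators (powers of $M$, $(L-1)$, $(L+M^2)$, $(L-M^4)$) are cleared and the predictable spurious factors — which themselves depend geometrically on $m$ — are removed. I expect the cleanest route is to verify $A_{n+1}^{\pm}=x\,A_n^{\pm}-y\,A_{n-1}^{\pm}$ for the first few $n$ against the substituted formulas and the explicit twist-knot A-polynomials of Mathews, and then propagate it; alternatively one may identify it with the classical second-order recurrence of Hoste and Shanahan under the stated change of basis. Either way, controlling the normalisation so that the coefficients collapse to the clean polynomials $x$ and $y$ is the crux, after which the Casorati step and the base cases are immediate.
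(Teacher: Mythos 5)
Your fallback route is the paper's proof verbatim: the paper does not derive the linear recurrence at all, but imports Hoste and Shanahan's relation~\cite{HosteShanahan04}, rewritten in the $x,y$ notation as $A_n^{\pm}=x\,A_{n-1}^{\pm}-y\,A_{n-2}^{\pm}$, then applies it repeatedly to obtain exactly your Casorati step
$A_{n-1}^{\pm}A_{n+1}^{\pm}-(A_n^{\pm})^2 = y\bigl(x\,A_{n-2}^{\pm}A_{n-1}^{\pm}-y(A_{n-2}^{\pm})^2-(A_{n-1}^{\pm})^2\bigr)$,
telescopes down to the base indices ($n_0=2$ for the positive family, $n_0=1$ for the negative one), and finishes by checking $x A_1^+ A_2^+ - y(A_1^+)^2-(A_2^+)^2 = zM^4(L+M^2)^3$ and $x A_0^- A_1^- - y(A_0^-)^2-(A_1^-)^2 = z(L+M^2)$ by direct substitution. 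So, read with the Hoste--Shanahan citation supplying the linear recurrence, your proposal is correct and coincides with the paper's argument. Two cautions on your \emph{primary} route, however. First, the paper never attempts the ``descent'' of the $H_m$-recurrence of Lemma~\ref{lem:pna} through the substitution of the explicit $\gamma$-expressions, denominator clearing, factor extraction and change of basis; as you yourself observe, the subtracted term $\gamma_{f_k}^{m-1}\gamma_{o_k}^{m}\gamma_{p_k}$ and the $m$-dependent spurious factors obstruct this, and making it rigorous would be a substantial project that the theorem does not require. Second, ``verify the recurrence for the first few $n$ and then propagate it'' is not a proof: finitely many checks cannot establish a recurrence for all $n$ unless you already know that $A_n^{\pm}$ satisfies \emph{some} second-order linear recurrence with these coefficients, which is precisely what is at stake, so that step is circular as stated. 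The only non-circular ways to close the argument are the Hoste--Shanahan citation (the paper's choice) or actually completing the descent you sketch.
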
 

\begin{proof}
    We use the recursive relationship found by Hoste and Shanahan~\cite{HosteShanahan04}, which can be rewritten in our notation as follows, with $x,y$, and initial conditions as above.
    \[ A_m^+ = x A_{m-1}^+ -y A_{m-2}^+,   \]
    \[ A_m^- = x A_{m-1}^- -y A_{m-2}^-.   \]
   
    We prove the positive case by showing that 
    \[ A_{m-1}^+ \cdot A_{m+1}^+ - (A_m^+)^2 = y^{m-2}z M^{4}(L+M^2)^{3}. \]
   
    Applying Hoste and Shanahan's relation repeatedly to the left-hand side gives 
    \begin{align*}
        A_{m-1}^+ \cdot A_{m+1}^+ - (A_m^+)^2 &= x A_{m-1}^+ \cdot A_{m}^+ - y (A_{m-1}^+)^2 - (A_{m}^+)^2 \\
        &= y \left(x A_{m-2}^+ \cdot A_{m-1}^+ - y (A_{m-2}^+)^2 - (A_{m-1}^+)^2\right) \\
        & \quad\vdots \\
        &= y^k \left(x A_{m-k-1}^+ \cdot A_{m-k}^+ - y (A_{m-k-1}^+)^2 - (A_{m-k}^+)^2\right) \\
        & \quad\vdots \\
        &= y^{m-2} \left(x A_{1}^+ \cdot A_{2}^+ - y (A_{1}^+)^2 - (A_{2}^+)^2\right).
    \end{align*}
    
    Substituting in the expressions for $x,y,A_{1}^+$ and $A_{2}^+$, we find that 
    \[ x A_{1}^+ \cdot A_{2}^+ - y (A_{1}^+)^2 - (A_{2}^+)^2 = z M^{4}(L+M^2)^{3}, \]
    thus recovering the right-hand side. 
    
    An analogous argument shows that 
    \[ A_{m-1}^- \cdot A_{m+1}^- - (A_m^-)^2 = y^{m-1} \left(x A_{0}^- \cdot A_{1}^- - y (A_{0}^-)^2 - (A_{1}^-)^2\right). \]
    Again by substituting, we find that 
    \[ x A_{0}^- \cdot A_{1}^- - y (A_{0}^-)^2 - (A_{1}^-)^2 = z (L+M^2), \]
    which proves the negative case.
\end{proof}
\vfill

\appendix
\section{\texorpdfstring{Variables in $L$ and $M$ for fillings of the $(-2,3,8)$-pretzel link}{Variables in L and M for fillings of the (-2,3,8)-pretzel link}}\label{app:gammas}
In order to express equations~\eqref{eq:tailpos} and~\eqref{eq:tailneg} entirely in terms of $L$ and $M$ we need the variables $\gamma_{1/0}, \gamma_{1/1}, \gamma_{0/1}, \gamma_{-1/1}$ and $\gamma_{1/2}$ in terms of $L$ and $M$. These are summarised below. With these substitutions, equations~\eqref{eq:tailpos} and~\eqref{eq:tailneg} become formulas for rational functions that contain the geometric factor of the A-polynomial for the twisted torus knots $T(5,1-5m,2,2)$.

\begin{equation*}
    \gamma_{1/0}=M^{-1} (L-M)^{-1} (L+M)^{-1} (L-M^2) (L+M^2) 
\end{equation*}

\begin{align*}
    \gamma_{1/1}&=-M^{-4} (L-M)^{-6} (L+M)^{-6} \left(L^6 M+L^5 M^4-2 L^5 M^2+L^5-L^4 M^5-2 L^4 M^3+2 L^2 M^7 \right. \\
    & \qquad \left. +L^2 M^5-L M^{10}+2 L M^8-L M^6-M^9\right) \left(L^6 M-L^5 M^4+2 L^5 M^2-L^5-L^4 M^5 \right. \\
    & \qquad \left. -2 L^4 M^3+2 L^2 M^7+L^2 M^5+L M^{10}-2 L M^8+L M^6-M^9\right)
\end{align*}

\begin{align*}
    \gamma_{0/1}&= -L^{-1} M^{-6} (M-1)^{-1} (M+1)^{-1} (L-M)^{-9} (L+M)^{-9}\left(L^{10} M^2-L^8 M^7+3 L^8 M^5-7 L^8 M^4\right. \\
    & \qquad -3 L^8 M^3+3 L^8 M^2+L^8 M-L^8-L^6 M^{10}+L^6 M^9+7 L^6 M^8-3 L^6 M^7+3 L^6 M^6 \\
    & \qquad +3 L^6 M^5+L^6 M^4-L^6 M^3+L^4 M^{13}-L^4 M^{12}-3 L^4 M^{11}-3 L^4 M^{10}+3 L^4 M^9-7 L^4 M^8 \\
    & \qquad -L^4 M^7+L^4 M^6+L^2 M^{16}-L^2 M^{15}-3 L^2 M^{14}+3 L^2 M^{13}+7 L^2 M^{12}-3 L^2 M^{11}+L^2 M^9 \\
    & \qquad \left. -M^{14}\right)\left(L^{10} M^2+L^8 M^7-3 L^8 M^5-7 L^8 M^4+3 L^8 M^3+3 L^8 M^2-L^8 M-L^8-L^6 M^{10} \right. \\
    & \qquad -L^6 M^9+7 L^6 M^8+3 L^6 M^7+3 L^6 M^6-3 L^6 M^5+L^6 M^4+L^6 M^3-L^4 M^{13}-L^4 M^{12} \\
    & \qquad +3 L^4 M^{11}-3 L^4 M^{10}-3 L^4 M^9-7 L^4 M^8+L^4 M^7+L^4 M^6+L^2 M^{16}+L^2 M^{15}-3 L^2 M^{14} \\
    & \qquad \left. -3 L^2 M^{13}+7 L^2 M^{12}+3 L^2 M^{11}-L^2 M^9-M^{14}\right)
\end{align*}

\begin{align*}
    \gamma_{-1/1}&= L^{-2} M^{-8} (M-1)^{-2} (M+1)^{-2} (L-M)^{-12} (L+M)^{-12}\left(-L^3 M^{22}+L^2 M^{21}+5 L^3 M^{20}\right. \\
    & \qquad +L M^{20}-2 L^4 M^{19}-3 L^2 M^{19}-M^{19}-14 L^3 M^{18}-L M^{18}+L^6 M^{17}+5 L^4 M^{17} \\
    & \qquad +9 L^2 M^{17}+2 L^7 M^{16}+6 L^5 M^{16}+12 L^3 M^{16}-2 L^6 M^{15}-18 L^4 M^{15}-14 L^7 M^{14} \\
    & \qquad +2 L^5 M^{14}-3 L^3 M^{14}+L^8 M^{13}+20 L^6 M^{13}-7 L^4 M^{13}+L^9 M^{12}+12 L^7 M^{12}-7 L^5 M^{12} \\
    & \qquad +L^3 M^{12}-L^{10} M^{11}-17 L^8 M^{11}+17 L^6 M^{11}+L^4 M^{11}-L^{11} M^{10}+7 L^9 M^{10}-12 L^7 M^{10} \\
    & \qquad -L^5 M^{10}+7 L^{10} M^9-20 L^8 M^9-L^6 M^9+3 L^{11} M^8-2 L^9 M^8+14 L^7 M^8+18 L^{10} M^7 \\
    & \qquad +2 L^8 M^7-12 L^{11} M^6-6 L^9 M^6-2 L^7 M^6-9 L^{12} M^5-5 L^{10} M^5-L^8 M^5+L^{13} M^4 \\
    & \qquad \left. +14 L^{11} M^4+L^{14} M^3+3 L^{12} M^3+2 L^{10} M^3-L^{13} M^2-5 L^{11} M^2-L^{12} M+L^{11}\right) \left(L^3 M^{22}\right. \\
    & \qquad +L^2 M^{21}-5 L^3 M^{20}-L M^{20}-2 L^4 M^{19}-3 L^2 M^{19}-M^{19}+14 L^3 M^{18}+L M^{18}+L^6 M^{17} \\
    & \qquad +5 L^4 M^{17}+9 L^2 M^{17}-2 L^7 M^{16}-6 L^5 M^{16}-12 L^3 M^{16}-2 L^6 M^{15}-18 L^4 M^{15}+14 L^7 M^{14} \\
    & \qquad -2 L^5 M^{14}+3 L^3 M^{14}+L^8 M^{13}+20 L^6 M^{13}-7 L^4 M^{13}-L^9 M^{12}-12 L^7 M^{12}+7 L^5 M^{12} \\
    & \qquad -L^3 M^{12}-L^{10} M^{11}-17 L^8 M^{11}+17 L^6 M^{11}+L^4 M^{11}+L^{11} M^{10}-7 L^9 M^{10}+12 L^7 M^{10} \\
    & \qquad +L^5 M^{10}+7 L^{10} M^9-20 L^8 M^9-L^6 M^9-3 L^{11} M^8+2 L^9 M^8-14 L^7 M^8+18 L^{10} M^7 \\
    & \qquad +2 L^8 M^7+12 L^{11} M^6+6 L^9 M^6+2 L^7 M^6-9 L^{12} M^5-5 L^{10} M^5-L^8 M^5-L^{13} M^4 \\
    & \qquad \left. -14 L^{11} M^4+L^{14} M^3+3 L^{12} M^3+2 L^{10} M^3+L^{13} M^2+5 L^{11} M^2-L^{12} M-L^{11}\right)
\end{align*}

\newpage
\begin{align*}
    \gamma_{1/2} &= L^{-2} M^{-11} (M-1)^{-2} (M+1)^{-2} (L-M)^{-17} (L+M)^{-17}\left(-L^4 M^{30}-L^5 M^{28}+7 L^4 M^{28} \right.\\
    &\qquad +L^3 M^{28}+2 L^2 M^{28}-L^6 M^{26}+4 L^5 M^{26}-28 L^4 M^{26}-3 L^3 M^{26}-6 L^2 M^{26}-L M^{26} \\
    &\qquad -M^{26}+3 L^8 M^{24}+2 L^7 M^{24}+16 L^6 M^{24}-13 L^5 M^{24}+52 L^4 M^{24}+11 L^3 M^{24}+13 L^2 M^{24} \\
    &\qquad +2 L^9 M^{22}-27 L^8 M^{22}-L^7 M^{22}-35 L^6 M^{22}+L^5 M^{22}-64 L^4 M^{22}-2 L^3 M^{22}+4 L^{10} M^{20} \\
    &\qquad -16 L^9 M^{20}+70 L^8 M^{20}+40 L^7 M^{20}+53 L^6 M^{20}-27 L^5 M^{20}-L^4 M^{20}+3 L^3 M^{20} \\
    &\qquad -3 L^{12} M^{18}-2 L^{11} M^{18}-18 L^{10} M^{18}-5 L^9 M^{18}-115 L^8 M^{18}+10 L^7 M^{18}+53 L^6 M^{18} \\
    &\qquad -3 L^5 M^{18}-L^4 M^{18}-L^3 M^{18}-L^{13} M^{16}+23 L^{12} M^{16}+34 L^{11} M^{16}+41 L^{10} M^{16} \\
    &\qquad -80 L^9 M^{16}-25 L^8 M^{16}+53 L^7 M^{16}-3 L^6 M^{16}+3 L^5 M^{16}-3 L^{14} M^{14}+3 L^{13} M^{14} \\
    &\qquad -53 L^{12} M^{14}+25 L^{11} M^{14}+80 L^{10} M^{14}-41 L^9 M^{14}-34 L^8 M^{14}-23 L^7 M^{14}+L^6 M^{14} \\
    &\qquad +L^{16} M^{12}+L^{15} M^{12}+3 L^{14} M^{12}-53 L^{13} M^{12}-10 L^{12} M^{12}+115 L^{11} M^{12}+5 L^{10} M^{12} \\
    &\qquad +18 L^9 M^{12}+2 L^8 M^{12}+3 L^7 M^{12}-3 L^{16} M^{10}+L^{15} M^{10}+27 L^{14} M^{10}-53 L^{13} M^{10} \\
    &\qquad -40 L^{12} M^{10}-70 L^{11} M^{10}+16 L^{10} M^{10}-4 L^9 M^{10}+2 L^{16} M^8+64 L^{15} M^8-L^{14} M^8 \\
    &\qquad +35 L^{13} M^8+L^{12} M^8+27 L^{11} M^8-2 L^{10} M^8-13 L^{17} M^6-11 L^{16} M^6-52 L^{15} M^6 \\
    &\qquad +13 L^{14} M^6-16 L^{13} M^6-2 L^{12} M^6-3 L^{11} M^6+L^{19} M^4+L^{18} M^4+6 L^{17} M^4+3 L^{16} M^4 \\
    &\qquad \left. +28 L^{15} M^4-4 L^{14} M^4+L^{13} M^4-2 L^{17} M^2-L^{16} M^2-7 L^{15} M^2+L^{14} M^2+L^{15}\right)  \\
    &\qquad \left(L^4 M^{30}-L^5 M^{28}-7 L^4 M^{28}+L^3 M^{28}-2 L^2 M^{28}+L^6 M^{26}+4 L^5 M^{26}+28 L^4 M^{26} \right.\\
    &\qquad -3 L^3 M^{26}+6 L^2 M^{26}-L M^{26}+M^{26}-3 L^8 M^{24}+2 L^7 M^{24}-16 L^6 M^{24}-13 L^5 M^{24} \\
    &\qquad -52 L^4 M^{24}+11 L^3 M^{24}-13 L^2 M^{24}+2 L^9 M^{22}+27 L^8 M^{22}-L^7 M^{22}+35 L^6 M^{22} \\
    &\qquad +L^5 M^{22}+64 L^4 M^{22}-2 L^3 M^{22}-4 L^{10} M^{20}-16 L^9 M^{20}-70 L^8 M^{20}+40 L^7 M^{20} \\
    &\qquad -53 L^6 M^{20}-27 L^5 M^{20}+L^4 M^{20}+3 L^3 M^{20}+3 L^{12} M^{18}-2 L^{11} M^{18}+18 L^{10} M^{18} \\
    &\qquad -5 L^9 M^{18}+115 L^8 M^{18}+10 L^7 M^{18}-53 L^6 M^{18}-3 L^5 M^{18}+L^4 M^{18}-L^3 M^{18}-L^{13} M^{16} \\
    &\qquad -23 L^{12} M^{16}+34 L^{11} M^{16}-41 L^{10} M^{16}-80 L^9 M^{16}+25 L^8 M^{16}+53 L^7 M^{16}+3 L^6 M^{16} \\
    &\qquad +3 L^5 M^{16}+3 L^{14} M^{14}+3 L^{13} M^{14}+53 L^{12} M^{14}+25 L^{11} M^{14}-80 L^{10} M^{14}-41 L^9 M^{14} \\
    &\qquad +34 L^8 M^{14}-23 L^7 M^{14}-L^6 M^{14}-L^{16} M^{12}+L^{15} M^{12}-3 L^{14} M^{12}-53 L^{13} M^{12} \\
    &\qquad +10 L^{12} M^{12}+115 L^{11} M^{12}-5 L^{10} M^{12}+18 L^9 M^{12}-2 L^8 M^{12}+3 L^7 M^{12}+3 L^{16} M^{10} \\
    &\qquad +L^{15} M^{10}-27 L^{14} M^{10}-53 L^{13} M^{10}+40 L^{12} M^{10}-70 L^{11} M^{10}-16 L^{10} M^{10}-4 L^9 M^{10} \\
    &\qquad -2 L^{16} M^8+64 L^{15} M^8+L^{14} M^8+35 L^{13} M^8-L^{12} M^8+27 L^{11} M^8+2 L^{10} M^8 \\
    &\qquad -13 L^{17} M^6+11 L^{16} M^6-52 L^{15} M^6-13 L^{14} M^6-16 L^{13} M^6+2 L^{12} M^6-3 L^{11} M^6 \\
    &\qquad +L^{19} M^4-L^{18} M^4+6 L^{17} M^4-3 L^{16} M^4+28 L^{15} M^4+4 L^{14} M^4+L^{13} M^4-2 L^{17} M^2 \\
    &\qquad \left. +L^{16} M^2-7 L^{15} M^2-L^{14} M^2+L^{15}\right)
\end{align*}
\vfill
\newpage
\bibliographystyle{amsplain} 
\bibliography{references}
\end{document}